\documentclass[pdflatex,sn-mathphys-num]{sn-jnl}
\usepackage{amsmath,amssymb,amsfonts,amsthm,mathtools}
\usepackage[T1]{fontenc}
\usepackage{lmodern}
\usepackage{microtype}
\hypersetup{
  pdftitle={A Measure-Theoretic Derivation of Zadeh Fuzzy Logic},
  pdfauthor={Angshul Majumdar},
  pdfsubject={Derivation of Zadeh fuzzy logic from measure theory},
  pdfkeywords={Zadeh fuzzy logic, measure algebra, Boolean algebra, hyperoperation, t-norm, MV-algebra}
}
\newtheorem{theorem}{Theorem}
\newtheorem{proposition}{Proposition}
\newtheorem{corollary}{Corollary}

\newcommand{\cB}{\mathcal{B}}
\newcommand{\cE}{\mathcal{E}}
\newcommand{\TL}{T_{\mathrm L}}
\newcommand{\TG}{T_{\mathrm G}}
\newcommand{\TP}{T_{\mathrm P}}
\newcommand{\st}{\mathbin{\star}}
\newcommand{\boxp}{\mathbin{\boxplus}}
\newcommand{\setdiff}{\mathbin{\backslash}}
\newcommand{\Luk}{\L{}ukasiewicz}

\begin{document}

\title[A Measure-Theoretic Derivation of Zadeh Fuzzy Logic]{A Measure-Theoretic Derivation of Zadeh Fuzzy Logic}

\author*[1]{\fnm{Angshul} \sur{Majumdar}}\email{angshul@iiitd.ac.in}

\affil*[1]{\orgdiv{Department of Electronics and Communication Engineering},
\orgname{Indraprastha Institute of Information Technology Delhi},
\orgaddress{\street{Okhla Industrial Estate, Phase III}, \city{New Delhi},
\postcode{110020}, \country{India}}}

\abstract{Zadeh fuzzy logic is normally introduced by assigning scalar membership degrees and postulating standard complement, minimum conjunction, and maximum disjunction. We derive this calculus from a strictly positive atomless normalized Boolean measure algebra. Equality of measure sends Boolean complement uniquely to $1-x$, but it cannot send meet to a single-valued operation: the exact scalar image of conjunction is the overlap interval $[(x+y-1)^+,\min\{x,y\}]$. We quantify the information lost under scalarization, prove that no nontrivial measure-preserving compression can retain meet, and show that the overlap operation is associative as a hyperoperation. Zadeh conjunction $\min\{x,y\}$ is then forced as the unique nondecreasing idempotent selection and is realized globally by nested event chains; its De Morgan dual is Zadeh disjunction $\max\{x,y\}$. Thus the min--max--standard-negation calculus follows from measure theory once the nesting commitment is made explicit. The \Luk{} and product conjunctions arise as minimum-overlap and independent-factor selections, clarifying alternative scalarizations. Exact minimax results, an ordered-valued extension, and a three-dimensional realization obstruction delimit what scalar fuzzy logic retains from Boolean structure.}

\keywords{Zadeh fuzzy logic, measure algebra, Boolean algebra, hyperoperation, t-norm, MV-algebra}

\maketitle

\section{Introduction}\label{sec:intro}
Zadeh's original fuzzy-set calculus assigns each proposition or set a degree in $[0,1]$ and uses the standard operations $1-x$, $\min\{x,y\}$, and $\max\{x,y\}$ for complement, conjunction, and disjunction \cite{Zadeh1965}. Later theories place these operations within the broader classes of t-norms, residua, and many-valued calculi \cite{Klement2000,Hajek1998}. Those frameworks characterize admissible scalar operations once scalar truth degrees have already been adopted. The question here runs in the opposite direction: can the Zadeh operations themselves be derived from a Boolean measure-theoretic substrate rather than postulated on $[0,1]$?

Algebraic logic provides the natural setting for this question because it studies logical operations through their algebraic semantics \cite{FontJansanaPigozzi2003}. MV-, BL-, and MTL-algebras organize major families of many-valued logics and their truth-functional conjunctions \cite{Panti1999,CignoliTorrens2006,NogueraEstevaGispert2005,Turunen2025,Rump2026}. Quantitative comparisons further distinguish the calculi generated by different fuzzy conjunctions \cite{KostrzyckaZaionc2024}. Our aim is not another axiomatization of those operations. It is to identify the exact scalar structure produced when Boolean events are quotiented by equality of measure and then to determine which additional structural requirement selects Zadeh's min--max calculus.

The relation between probability and logic is often formulated by enriching a language with probability terms or quantifiers over events \cite{Speranski2025}. We instead compress the event algebra itself to marginal measures. This makes the problem representational: Boolean complement, meet, join, and order are examined after all distinctions between equal-measure events have been erased. Such representation questions are central throughout non-classical algebraic logic \cite{Umadevi2025}, but the specific quotient considered here exposes a sharp asymmetry between complement and conjunction.

The mathematical problem can be stated directly. Let $(\cB,\mu)$ be an atomless normalized Boolean measure algebra and write $a\equiv_\mu b$ iff $\mu(a)=\mu(b)$. The exact scalar image of meet is
\begin{equation}
 x\st y:=\{\mu(a\wedge b):\mu(a)=x,\ \mu(b)=y\}.
 \label{eq:star-def}
\end{equation}
Classical Fr\'echet theory evaluates this set. Our first task is to determine the algebraic structure carried by these attainable sets and to quantify the Boolean information destroyed by the quotient. Our second, central task is to recover Zadeh fuzzy logic. Complement descends uniquely to $1-x$. Meet becomes an associative overlap hyperoperation, and imposing the idempotence and monotonicity inherited by Zadeh conjunction forces the upper-overlap selection $\min\{x,y\}$. De Morgan duality then gives $\max\{x,y\}$, while nested event chains realize both operations simultaneously for all degrees. In this precise sense, the Zadeh min--max--standard-negation calculus is derived from measure theory.

The remaining results locate that derivation within a wider landscape. The lower-overlap and independent-factor selections yield the \Luk{} and product conjunctions. An exact intersection-test discrepancy gives a no-compression theorem for Boolean meet. The overlap hyperoperation has an exact finite-arity spectrum; product attains the sharp binary minimax error but ceases to be minimax beyond arity two. The construction extends to interval-divisible measures valued in unital Abelian $\ell$-groups, and a three-dimensional obstruction separates pointwise sharp attainability from simultaneous realization.

Section~\ref{sec:scalarization} determines the exact loss under scalarization. Section~\ref{sec:hyper} derives the Zadeh operations and analyzes alternative selections. Section~\ref{sec:ordered} gives the ordered-valued extension, Section~\ref{sec:global} separates local attainability from global realization, and Section~\ref{sec:conclusion} concludes.

The argument uses classical results only at clearly identified points: nonatomic divisibility, sharp Fr\'echet bounds and their attainability, product-measure multiplicativity, and standard copula representation \cite{Halmos1950,Sierpinski1922,Frechet1935,Boole1854,Sklar1959,Nelsen2006}. Standard facts about many-valued calculi, residuation, the Frank equation, continuous t-norms, and the $\ell$-group representation of MV-algebras are likewise used in their established form \cite{Chang1958,Godel1932,Dummett1959,Frank1979,Mostert1957,Ling1965,Mundici1986,Cignoli2000}. The contribution is the derivation of Zadeh's operations and the exact analysis of what is lost, selected, and retained when Boolean measure structure is scalarized.

\section{Scalarization and Exact Information Loss}\label{sec:scalarization}
Throughout, $(\cB,\mu)$ is a strictly positive atomless normalized measure algebra. We use the classical divisibility theorem: if $0\le t\le\mu(a)$, some $b\le a$ satisfies $\mu(b)=t$ \cite{Halmos1950,Sierpinski1922}. Also $\mu(a\vee b)+\mu(a\wedge b)=\mu(a)+\mu(b)$ and $\mu(a^c)=1-\mu(a)$. We write $a\setdiff b=a\wedge b^c$ and $a\triangle b=(a\setdiff b)\vee(b\setdiff a)$.

\begin{proposition}[Scalar complement is uniquely determined]\label{prop:complement}
If $N:[0,1]\to[0,1]$ satisfies $\mu(a^c)=N(\mu(a))$ for every $a\in\cB$, then $N(x)=1-x$.
\end{proposition}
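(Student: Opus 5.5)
The argument is short and rests on two ingredients: the identity $\mu(a^c)=1-\mu(a)$ stated at the start of this section, and the divisibility theorem. The identity tells us what $N$ must do at every value that is actually attained. Divisibility tells us that every value in $[0,1]$ is attained. So the plan is to show surjectivity of $\mu$ first and then read off $N$ pointwise.

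Fix an arbitrary $x\in[0,1]$. We apply divisibility to the top element $1_{\cB}$. Normalization gives $\mu(1_{\cB})=1$, so $0\le x\le\mu(1_{\cB})$. Divisibility then yields some $a\le 1_{\cB}$ with $\mu(a)=x$. Hence $\mu:\cB\to[0,1]$ is onto.

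Now let $a$ be such an event, with $\mu(a)=x$. The hypothesis on $N$ gives $N(x)=N(\mu(a))=\mu(a^c)$. Finite additivity on the disjoint pair $a$, $a^c$ gives $\mu(a^c)=1-\mu(a)=1-x$. Combining the two, $N(x)=1-x$. Since $x$ was arbitrary, $N(x)=1-x$ on all of $[0,1]$.

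The result is an identity of functions on the whole interval, not merely on the range of $\mu$. It is also worth checking consistency: $1-x$ does satisfy the hypothesis, so the hypothesis is not vacuous. If two events $a$ and $b$ have the same measure $x$, their complements both have measure $1-x$, so complement is well defined on the quotient by $\equiv_\mu$.

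There is no substantive obstacle. The one point needing care is the surjectivity step. Without atomlessness, $\mu$ need not be onto $[0,1]$, and $N$ would be pinned down only on the range of $\mu$. That step is exactly where the paper's standing assumption and the cited divisibility theorem are invoked. The endpoints $x=0$ and $x=1$ need no divisibility at all, since the bottom and top elements attain them.
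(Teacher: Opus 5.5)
Your proof is correct and follows the paper's own argument. Divisibility below the top element makes every $x\in[0,1]$ the measure of some event $a$, and then $N(x)=\mu(a^c)=1-\mu(a)=1-x$.
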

\begin{proof}
For each $x\in[0,1]$, divisibility gives $a$ with $\mu(a)=x$. Then $N(x)=\mu(a^c)=1-\mu(a)=1-x$.
\end{proof}
Thus complement genuinely descends before any conjunction is selected.

The next formula is a direct measure-algebra specialization of the classical Hahn--Jordan decomposition for signed measures \cite{Halmos1950,Bogachev2007}; we include the short proof because its compression consequence is central here.

\begin{proposition}[Exact intersection-test discrepancy]\label{prop:discrepancy}
For $a,b\in\cB$, define $D(a,b):=\sup_{c\in\cB}|\mu(a\wedge c)-\mu(b\wedge c)|$. Then
\begin{equation}
D(a,b)=\max\{\mu(a\setdiff b),\mu(b\setdiff a)\}=\frac{\mu(a\triangle b)+|\mu(a)-\mu(b)|}{2}.
\label{eq:discrepancy}
\end{equation}
In particular, if $\mu(a)=\mu(b)$, then $D(a,b)=\mu(a\triangle b)/2$.
\end{proposition}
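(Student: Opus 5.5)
The plan is to prove an upper bound on $|\mu(a\wedge c)-\mu(b\wedge c)|$ for arbitrary $c$, show it is attained by an explicit choice of $c$, and then convert the resulting maximum into the symmetric-difference form by additivity.

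\textbf{Upper bound.} Fix $c\in\cB$. Splitting $a\wedge c$ into its parts inside and outside $b$ gives
\[
a\wedge c=(a\wedge b\wedge c)\vee((a\setdiff b)\wedge c),
\]
and these two pieces are disjoint. The same decomposition with $a$ and $b$ exchanged handles $b\wedge c$. By additivity of $\mu$ on disjoint elements, the common term $\mu(a\wedge b\wedge c)$ cancels, leaving
\[
\mu(a\wedge c)-\mu(b\wedge c)=\mu((a\setdiff b)\wedge c)-\mu((b\setdiff a)\wedge c).
\]
Both terms on the right are nonnegative. The first is at most $\mu(a\setdiff b)$ and the second is at most $\mu(b\setdiff a)$. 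A difference of two nonnegative numbers has absolute value at most the larger of the two, so
\[
|\mu(a\wedge c)-\mu(b\wedge c)|\le\max\{\mu(a\setdiff b),\mu(b\setdiff a)\}.
\]

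\textbf{Attainment.} Take $c=a\setdiff b$. Then $b\wedge c=0$, so the difference equals $\mu(a\setdiff b)$. Symmetrically, $c=b\setdiff a$ attains $\mu(b\setdiff a)$. Hence the supremum is a maximum and equals $\max\{\mu(a\setdiff b),\mu(b\setdiff a)\}$. This is the Hahn--Jordan picture: $a\setdiff b$ and $b\setdiff a$ are the positive and negative sets for the signed measure $\mu(a\wedge\cdot)-\mu(b\wedge\cdot)$. No atomlessness is needed anywhere in the argument.

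\textbf{Second equality.} Put $p=\mu(a\setdiff b)$ and $q=\mu(b\setdiff a)$. Disjointness of $a\setdiff b$ and $b\setdiff a$ gives $\mu(a\triangle b)=p+q$. The decompositions $a=(a\wedge b)\vee(a\setdiff b)$ and $b=(a\wedge b)\vee(b\setdiff a)$ give $\mu(a)-\mu(b)=p-q$. The elementary identity
\[
\max\{p,q\}=\frac{(p+q)+|p-q|}{2}
\]
then yields \eqref{eq:discrepancy}. The special case $\mu(a)=\mu(b)$ follows by setting $|p-q|=0$.

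The only step that needs any care is the cancellation in the upper bound. It relies on $a\wedge c$ and $b\wedge c$ sharing exactly the disjoint common piece $a\wedge b\wedge c$; everything else is bookkeeping with finite additivity.
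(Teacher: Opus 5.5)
Your proof is correct and follows essentially the same route as the paper: you cancel the common part $a\wedge b\wedge c$ to bound the difference in $[-\mu(b\setdiff a),\mu(a\setdiff b)]$, attain both endpoints with $c=a\setdiff b$ and $c=b\setdiff a$, and then convert via $\mu(a\triangle b)=p+q$ and $\mu(a)-\mu(b)=p-q$. Your remark that atomlessness is not needed is also accurate, and strict positivity is not needed either.
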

\begin{proof}
Put $p=\mu(a\setdiff b)$ and $q=\mu(b\setdiff a)$. The common part cancels, so for every $c$ the difference $\mu(a\wedge c)-\mu(b\wedge c)$ lies in $[-q,p]$. Taking $c=a\setdiff b$ or $c=b\setdiff a$ attains the two endpoints. Finally $\mu(a\triangle b)=p+q$ and $\mu(a)-\mu(b)=p-q$, which yields \eqref{eq:discrepancy}.
\end{proof}
The same formula shows that $D$ is a metric and $\frac12\mu(a\triangle b)\le D(a,b)\le\mu(a\triangle b)$, so it is topologically equivalent to the usual symmetric-difference metric.

\begin{corollary}[No nontrivial meet-preserving measure compression]\label{cor:no-compression}
Let $\sim$ be an equivalence relation on $\cB$ such that $a\sim b$ implies $\mu(a)=\mu(b)$ and $a\wedge c\sim b\wedge c$ for every $c\in\cB$. Then $a\sim b$ implies $a=b$.
\end{corollary}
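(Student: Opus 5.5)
The plan is to combine the two hypotheses on $\sim$ by testing against well-chosen events $c$, and then to use strict positivity of $\mu$ to conclude. Suppose $a\sim b$. The second hypothesis gives $a\wedge c\sim b\wedge c$ for every $c\in\cB$. The first hypothesis then converts this into the scalar identity $\mu(a\wedge c)=\mu(b\wedge c)$ for all $c$. In the language of Proposition~\ref{prop:discrepancy}, this says precisely that $D(a,b)=0$.

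By the exact formula \eqref{eq:discrepancy}, $D(a,b)=\max\{\mu(a\setdiff b),\mu(b\setdiff a)\}$, so both $\mu(a\setdiff b)=0$ and $\mu(b\setdiff a)=0$. One can see this directly without the proposition: take $c=b^c$ to get $\mu(a\setdiff b)=\mu(b\wedge b^c)=\mu(0)=0$, and symmetrically $c=a^c$. Since the measure algebra is strictly positive, an element of measure zero is the bottom element. Hence $a\setdiff b=0$ and $b\setdiff a=0$, that is, $a\le b$ and $b\le a$, so $a=b$.

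There is no genuine obstacle. The only points needing care are the following.
\begin{itemize}
\item The choice of test elements $c=b^c$ and $c=a^c$, which reduces the condition to a single measure-zero statement.
\item The explicit invocation of strict positivity. Without it, the conclusion would only be $a=b$ almost everywhere, i.e.\ $\mu(a\triangle b)=0$.
\end{itemize}
Reflexivity and transitivity of $\sim$ are not needed. The argument uses only that $\sim$ is a relation compatible with meets by fixed elements and finer than equality of measure.
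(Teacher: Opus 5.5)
Your proof is correct and is essentially the paper's argument: meet compatibility plus measure preservation gives $\mu(a\wedge c)=\mu(b\wedge c)$ for all $c$, hence $D(a,b)=0$ by Proposition~\ref{prop:discrepancy}, and strict positivity forces $a\setdiff b=b\setdiff a=0$. Your direct shortcut with the test elements $c=b^c$ and $c=a^c$ simply makes explicit the attaining choices already present in that proposition's proof.
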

\begin{proof}
If $a\sim b$, meet compatibility and measure preservation give $\mu(a\wedge c)=\mu(b\wedge c)$ for every $c$. Thus $D(a,b)=0$ by Proposition~\ref{prop:discrepancy}. Hence $\mu(a\setdiff b)=\mu(b\setdiff a)=0$, and strict positivity gives $a=b$.
\end{proof}

\begin{corollary}[Diameter of a scalar fiber]\label{cor:fiber-diameter}
For fixed $x\in[0,1]$, $\sup\{D(a,b):\mu(a)=\mu(b)=x\}=\min\{x,1-x\}$.
\end{corollary}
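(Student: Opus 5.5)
By Proposition~\ref{prop:discrepancy}, when $\mu(a)=\mu(b)=x$ we have $D(a,b)=\mu(a\triangle b)/2$. Write $p=\mu(a\setdiff b)=\mu(b\setdiff a)$; these two differences are equal because $\mu(a)=\mu(b)$. Then $\mu(a\triangle b)=2p$, so $D(a,b)=p$. The task therefore reduces to two steps: bound $p=\mu(a\setdiff b)$ above by $\min\{x,1-x\}$ over the fiber, and show this bound is attained.

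For the upper bound, first note $a\setdiff b\le a$, which gives $p\le \mu(a)=x$. Second, $a\setdiff b\le b^c$, which gives $p\le\mu(b^c)=1-x$. Together these yield $D(a,b)\le\min\{x,1-x\}$ for every pair in the fiber.

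For attainment, I will use the divisibility theorem to build two events with $\mu(a\wedge b)=(2x-1)^+$, the minimal Fr\'echet overlap. Set $m=\min\{x,1-x\}$, and split into two cases.
\begin{itemize}
\item[(i)] If $x\le 1/2$, divisibility gives $a$ with $\mu(a)=x$. Since $\mu(a^c)=1-x\ge x$, divisibility gives $b\le a^c$ with $\mu(b)=x$. The two events are disjoint, so $p=\mu(a\setdiff b)=x=m$.
\item[(ii)] If $x>1/2$, apply case (i) to $1-x<1/2$. This gives disjoint $a',b'$ of measure $1-x$. Take $a=a'^c$ and $b=b'^c$, both of measure $x$. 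Then $a\setdiff b=a'^c\wedge b'=b'$, since $b'\le a'^c$. Hence $p=1-x=m$.
\end{itemize}
Both cases attain the bound, so the supremum equals $m$ and is in fact a maximum. The degenerate cases $x\in\{0,1\}$ are covered automatically: there $m=0$, and any choice with $a=b$ works.

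The only mildly delicate step is the attainment construction. In particular, case (ii) must obtain the complementary configuration from case (i) by complementation, and it relies on atomlessness exactly once, through divisibility. I do not expect a genuine obstacle.
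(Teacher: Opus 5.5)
Your proof is correct and takes essentially the paper's route: $D(a,b)=\mu(a\setdiff b)=x-\mu(a\wedge b)$, your bounds from $a\setdiff b\le a$ and $a\setdiff b\le b^c$ are the Fr\'echet lower bound $\mu(a\wedge b)\ge(2x-1)^+$ that the paper cites, and your divisibility construction is the sharp attainability it invokes. The only difference is that you prove these classical facts inline rather than citing them, which makes your argument self-contained.
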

\begin{proof}
For equal measures, Proposition~\ref{prop:discrepancy} gives $D(a,b)=x-\mu(a\wedge b)$. The sharp Fr\'echet lower bound gives $\mu(a\wedge b)\ge(2x-1)^+$, hence $D(a,b)\le x-(2x-1)^+=\min\{x,1-x\}$. Sharp attainability realizes equality.
\end{proof}
Thus equality of measure is not merely ``not enough dependence information'': every nontrivial measure-preserving compression destroys meet, and Corollary~\ref{cor:fiber-diameter} quantifies the diameter of what is collapsed to one scalar. Complement is exceptional because $\mu(a^c)=1-\mu(a)$.

\begin{proposition}[What survives of order]\label{prop:order}
For $x,y\in[0,1]$, $x\le y$ iff there exist $a,b\in\cB$ with $\mu(a)=x$, $\mu(b)=y$, and $a\le b$. However, the measures of particular representatives do not determine whether $a\le b$.
\end{proposition}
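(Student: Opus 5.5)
The statement has two parts: an equivalence characterising $x\le y$ through events, and a negative claim that measures alone cannot detect inclusion. I will treat them in that order.

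\emph{The equivalence.} The reverse direction is immediate from monotonicity of $\mu$: if $a\le b$ then $b=a\vee(b\setdiff a)$ with the two pieces disjoint. Hence $\mu(b)=\mu(a)+\mu(b\setdiff a)\ge\mu(a)$, that is, $x\le y$.

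For the forward direction, assume $x\le y$ and build a nested pair with divisibility.
\begin{itemize}
\item Apply divisibility to the top element $1$, with $\mu(1)=1\ge y$, to obtain $b$ with $\mu(b)=y$.
\item Apply divisibility again inside $b$, since $0\le x\le y=\mu(b)$, to obtain $a\le b$ with $\mu(a)=x$.
\end{itemize}
This gives the required pair. It is the same nested-chain idea that later realises $\min\{x,y\}$.

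\emph{The negative claim.} It suffices to show that for suitable $x\le y$ there are two pairs with the same measures, one comparable and one not. Whenever $x>0$ and $y<1$ we can do this; the natural way is to exhibit the extreme overlaps.
\begin{itemize}
\item Take the nested pair $a\le b$ built above.
\item For the non-nested pair, choose $b'$ with $\mu(b')=y$ using divisibility in $1$. Since $\mu(b'^c)=1-y>0$, pick $d\le b'^c$ with $\mu(d)=\min\{x,1-y\}>0$. Then pick $e\le b'$ with $\mu(e)=x-\mu(d)$, which is possible because $x-\mu(d)\le x\le y$. Put $a'=d\vee e$.
\end{itemize}
Since $d$ and $e$ are disjoint, $\mu(a')=x$. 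Moreover $a'\wedge b'^c=d$ has positive measure. Strict positivity then gives $a'\setdiff b'\neq0$, so $a'\not\le b'$.

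Thus $(a,b)$ and $(a',b')$ carry identical scalar measures $(x,y)$, but only the first pair is ordered. So no function of $(\mu(a),\mu(b))$ decides whether $a\le b$.

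The boundary cases $x=0$ or $y=1$ need a remark, since there inclusion is forced up to null sets, and strict positivity makes it exact. The claim should therefore be read as failing to be determined in general, witnessed whenever $0<x$ and $y<1$. A symmetric witness also handles $x>y$ trivially, since then $a\not\le b$ always.

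The only delicate point is this bookkeeping. It consists of ensuring $\mu(d)>0$ and stating precisely that the non-determination is an existence claim over the interior cases. No step beyond divisibility and strict positivity is required.
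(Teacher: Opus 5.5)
Your proof is correct and follows the paper's argument: monotonicity gives one direction, divisibility first in $1$ and then inside $b$ gives the other, and the negative claim is shown by two pairs in one scalar fiber, one nested and one not. The only difference is scope: the paper uses just the fiber $x=y<1/2$, contrasting the pair $(a,a)$ with disjoint representatives, while your construction works in every fiber with $0<x\le y<1$ and correctly notes that inclusion is forced when $x=0$ or $y=1$. One small misattribution: $a'\setdiff b'=d\neq 0$ follows directly from $\mu(d)>0=\mu(0)$, so strict positivity is not needed at that step.
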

\begin{proof}
If $a\le b$, monotonicity gives $x\le y$. Conversely, if $x\le y$, choose $b$ with $\mu(b)=y$ and use divisibility inside $b$ to choose $a\le b$ with $\mu(a)=x$. For nonrecoverability, choose $0<x<1/2$ and disjoint $a,b$ with $\mu(a)=\mu(b)=x$; neither is below the other. By contrast, if $a\le b$ and $\mu(a)=\mu(b)$, then $\mu(b\setdiff a)=0$, so strict positivity forces $a=b$.
\end{proof}

The classical sharp overlap theorem now gives
\begin{equation}
 x\st y=[(x+y-1)^+,\min\{x,y\}],
 \label{eq:frechet}
\end{equation}
with every value attained \cite{Frechet1935,Boole1854}. Define likewise $x\boxp y:=\{\mu(a\vee b):\mu(a)=x,\ \mu(b)=y\}$. Its exact value is $[\max\{x,y\},\min\{1,x+y\}]$, and Boolean complement induces the hyper-De Morgan identity $x\boxp y=1-((1-x)\st(1-y))$. The width of the overlap interval is $\min\{x,y,1-x,1-y\}$, so it vanishes exactly when one marginal is crisp.

\begin{proposition}[Exact dependence coordinate]\label{prop:dependence}
For events of marginal measures $x,y$, put $\kappa(a,b)=\mu(a\wedge b)-xy$. Its attainable range is
$[-\min\{xy,(1-x)(1-y)\},\min\{x(1-y),y(1-x)\}]$. Thus product conjunction is precisely the zero-dependence point inside every Fr\'echet fiber.
\end{proposition}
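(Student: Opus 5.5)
The plan is to reduce the statement to the sharp overlap formula \eqref{eq:frechet}. Fix $x,y\in[0,1]$ and events $a,b$ with $\mu(a)=x$, $\mu(b)=y$. Since $xy$ is a constant on this fiber, the map $\mu(a\wedge b)\mapsto\kappa(a,b)=\mu(a\wedge b)-xy$ is a translation. The attainable set of $\kappa$ is therefore the translate of $x\st y$ by $-xy$. By \eqref{eq:frechet} every value of the interval $[(x+y-1)^+,\min\{x,y\}]$ is attained, so the range of $\kappa$ is exactly
\[
\bigl[(x+y-1)^+-xy,\ \min\{x,y\}-xy\bigr].
\]
No further existence argument is needed beyond the classical attainability already invoked.

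The remaining work is to rewrite the endpoints algebraically. For the upper endpoint, $\min\{x,y\}-xy=\min\{x-xy,\,y-xy\}=\min\{x(1-y),y(1-x)\}$. For the lower endpoint, $(x+y-1)^+=\max\{0,x+y-1\}$, so
\[
(x+y-1)^+-xy=\max\{-xy,\ x+y-1-xy\}.
\]
Since $x+y-1-xy=-(1-x)(1-y)$, this equals $\max\{-xy,-(1-x)(1-y)\}=-\min\{xy,(1-x)(1-y)\}$. That is the stated interval.

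For the final sentence, I would check that $0$ lies in this range for all $x,y$: both endpoint expressions are products of numbers in $[0,1]$, so the lower endpoint is $\le 0$ and the upper endpoint is $\ge 0$. Hence $xy\in x\st y$, and $xy$ is the unique value in the fiber at which $\kappa=0$. This identifies product conjunction as the zero-dependence selection in every Fréchet fiber. If one also wants realizability, it can be witnessed by independent events, for example product rectangles in $[0,1]^2$ or through divisibility. That is not required, because attainability already follows from \eqref{eq:frechet}.

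There is no real obstacle here. The only step needing care is the identity $x+y-1-xy=-(1-x)(1-y)$, together with correctly turning the positive part into a $\max$, so that the lower endpoint comes out as a $-\min$.
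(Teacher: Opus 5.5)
Your proof is correct and follows essentially the same route as the paper: you translate the sharp Fr\'echet interval \eqref{eq:frechet} by $-xy$, rewrite the endpoints, and note that zero is attained because $xy$ lies in that interval. The only cosmetic difference is that you rewrite the endpoints with $\max$/$\min$ identities, whereas the paper splits into the cases $x+y\le1$ versus $x+y\ge1$ and $x\le y$ versus $y\le x$.
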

\begin{proof}
Subtract $xy$ from the endpoints in \eqref{eq:frechet}. If $x+y\le1$ the lower endpoint is $-xy$, while if $x+y\ge1$ it is $-(1-x)(1-y)$. For the upper endpoint, if $x\le y$ then $\min\{x,y\}-xy=x(1-y)\le y(1-x)$; the other case is symmetric. Since $xy$ lies in \eqref{eq:frechet}, zero is attainable, canonically by independent product factors.
\end{proof}

\begin{proposition}[Uniform realization against a fixed representative]\label{prop:fixed-realization}
Fix $a\in\cB$ with $\mu(a)=x$. For every $y\in[0,1]$ and every $z\in x\st y$, there exists $b\in\cB$ with $\mu(b)=y$ and $\mu(a\wedge b)=z$.
\end{proposition}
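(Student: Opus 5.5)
The plan is a direct construction: build $b$ as a disjoint join $b=b_1\vee b_2$ with $b_1\le a$ and $b_2\le a^c$. Then $a\wedge b=b_1$, and $\mu(b)=\mu(b_1)+\mu(b_2)$. So it suffices to prescribe $\mu(b_1)=z$ and $\mu(b_2)=y-z$. The divisibility theorem produces both pieces, provided
\[
0\le z\le \mu(a)=x \qquad\text{and}\qquad 0\le y-z\le \mu(a^c)=1-x .
\]

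The first key step is to check that every $z\in x\st y$ satisfies these two inequalities. By \eqref{eq:frechet}, $z\in x\st y$ means $(x+y-1)^+\le z\le\min\{x,y\}$. Hence $0\le z\le x$ holds. Also $z\le y$ gives $y-z\ge0$, and $z\ge x+y-1$ gives $y-z\le 1-x$. Thus the four inequalities are exactly the Fr\'echet constraints unpacked; this is routine.

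The second step applies divisibility twice, in $a$ and in $a^c$. It gives $b_1\le a$ with $\mu(b_1)=z$, and $b_2\le a^c$ with $\mu(b_2)=y-z$. Set $b=b_1\vee b_2$. Since $b_1\wedge b_2\le a\wedge a^c=0$, additivity gives $\mu(b)=y$. Moreover $a\wedge b=(a\wedge b_1)\vee(a\wedge b_2)=b_1\vee 0=b_1$ by distributivity, so $\mu(a\wedge b)=z$.

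I expect no genuine obstacle; the only point needing care is the degenerate cases. When $x\in\{0,1\}$, one of $a,a^c$ is the zero element. Divisibility is still valid there, because the required measure is then forced to be $0$ by the inequalities above, and we take the corresponding piece to be $0$. The substance of the proposition is that the construction is uniform: $a$ is fixed in advance and only $b$ varies. This is what later permits chaining realizations, for instance in checking associativity of $\st$ as a hyperoperation.
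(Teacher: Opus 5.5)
Your proof is correct and is essentially the paper's own argument: you unpack the Fr\'echet bounds into $0\le z\le x$ and $0\le y-z\le 1-x$, use divisibility inside $a$ and inside $a^c$, and take the disjoint join. One small aside on your closing remark: the paper does not use this proposition for associativity of $\st$, which Theorem~\ref{thm:hyperassoc} proves by interval arithmetic alone; it uses it in Corollary~\ref{cor:finite-overlap} to show that iterated overlaps are realized by a single event family.
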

\begin{proof}
By \eqref{eq:frechet}, $0\le z\le x$ and $0\le y-z\le1-x=\mu(a^c)$. Divisibility gives $p\le a$ with $\mu(p)=z$ and $q\le a^c$ with $\mu(q)=y-z$. For $b=p\vee q$, disjoint additivity gives $\mu(b)=y$, while $a\wedge b=p$.
\end{proof}
By definition, any set-valued scalar rule $H$ sound for every Boolean meet must contain every attainable value, hence $x\st y\subseteq H(x,y)$. Thus $\st$ is the unique smallest sound marginal-only abstraction of meet. Equation~\eqref{eq:frechet} is exact only when the two marginal measures are the constraints: for example, $\mu(a\wedge a^c)=0$ although $x\st(1-x)$ is generally an interval, and $\mu(a\wedge a)=x$ although $x\st x$ is generally non-singleton.

\section{Derivation of Zadeh Conjunction and Alternative Selections}\label{sec:hyper}
For nonempty $A\subseteq[0,1]$ define $A\st z=\bigcup_{a\in A}(a\st z)$ and similarly on the right.

\begin{theorem}[Hyperassociativity]\label{thm:hyperassoc}
For all $x,y,z\in[0,1]$,
\begin{equation}
(x\st y)\st z=x\st(y\st z)=[(x+y+z-2)^+,\min\{x,y,z\}].
\label{eq:hyperassoc}
\end{equation}
\end{theorem}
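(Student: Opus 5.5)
By the symmetry $x\st y=y\st x$ visible in \eqref{eq:frechet}, it suffices to compute $(x\st y)\st z$ and show it equals $I:=[(x+y+z-2)^+,\min\{x,y,z\}]$. The right-hand side $x\st(y\st z)=(y\st z)\st x$ then has the same form with $x,y,z$ permuted, and $I$ is symmetric in $x,y,z$. The plan is therefore to evaluate the union $\bigcup_{w\in[L,U]}[(w+z-1)^+,\min\{w,z\}]$, where $L=(x+y-1)^+$ and $U=\min\{x,y\}$, using only \eqref{eq:frechet}.

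First we locate the endpoints of the union. The lower endpoint $(w+z-1)^+$ is nondecreasing in $w$, so the smallest value is $(L+z-1)^+$. This equals $(x+y+z-2)^+$: if $x+y\ge1$ it is $(x+y+z-2)^+$ directly, and if $x+y<1$ it is $(z-1)^+=0=(x+y+z-2)^+$, since $x+y+z-2<z-1\le0$. The upper endpoint $\min\{w,z\}$ is also nondecreasing in $w$, so the largest value is $\min\{U,z\}=\min\{x,y,z\}$.

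Second we show the union has no gaps. Each fiber $[(w+z-1)^+,\min\{w,z\}]$ is a nonempty closed interval, and both endpoints depend continuously and monotonically on $w$. As $w$ runs through the connected interval $[L,U]$, the union of such intervals is connected, since the union of a continuously parametrized family of intervals over a connected parameter set is an interval. Concretely, for any $t$ between the extreme endpoints, set $w^*=\sup\{w\in[L,U]:(w+z-1)^+\le t\}$. Continuity gives $(w^*+z-1)^+\le t$. If $\min\{w^*,z\}<t$, then either $w^*=U$, which contradicts $t\le\min\{U,z\}$, or $w^*<U$ and $(w+z-1)^+>t$ just above $w^*$. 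In that second case continuity forces $(w^*+z-1)^+=t>\min\{w^*,z\}$, which is impossible because every fiber is nonempty. So $t$ lies in the fiber at $w^*$, and the union equals $I$.

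The main obstacle is this gap-free step together with the degenerate case distinctions in the lower endpoint; everything else is monotonicity bookkeeping. As a cross-check, we could also argue Boolean-theoretically. The containment $\subseteq$ follows from the Fréchet bounds $\mu(a\wedge b\wedge c)\ge\mu(a)+\mu(b)+\mu(c)-2$ and $\mu(a\wedge b\wedge c)\le\min\{\mu(a),\mu(b),\mu(c)\}$. Attainment of every value then follows from Proposition~\ref{prop:fixed-realization} applied iteratively. The scalar computation above, however, is self-contained and is the route I would write out.
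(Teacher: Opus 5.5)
Your proof is correct. The inclusion of $(x\st y)\st z$ in $[(x+y+z-2)^+,\min\{x,y,z\}]$ is essentially the paper's argument; the paper avoids your case split on $x+y<1$ by chaining $t\ge w+z-1\ge x+y+z-2$ with $t\ge0$.

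The reverse inclusion is done differently. Instead of your supremum/continuity argument, the paper exhibits an explicit intermediate value. In your notation it is $\max\{t,L\}$, written there as $\max\{w,x+y-1,0\}$ with $w$ the target. The paper then checks directly that this value lies in $x\st y$ and that $t$ lies in its fiber. In fact, for a target $t$ the admissible intermediates form exactly the interval $[\max\{L,t\},\min\{U,t+1-z\}]$. The paper takes the left endpoint, and your $w^*$ is the right endpoint, $w^*=\min\{U,t+1-z\}$. So your topological step can be replaced by a one-line verification.

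Your route buys generality: it works for any interval-parametrized family of nonempty closed intervals with continuous monotone endpoints, with no witness to guess. The explicit witness buys portability. The same computation is reused essentially verbatim for $\Gamma(G,u)$ in Theorem~\ref{thm:ordered-hyperassoc}, with $L\vee t$ as the witness. An argument via suprema, continuity, and points ``just above $w^*$'' is unavailable there, since $G$ need be neither complete nor totally ordered. The closed form $U\wedge(t+u-z)$ of your $w^*$ would, however, also work there once checked directly.

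Your Boolean cross-check is not independent of the scalar computation. Attaining a target $t$ by iterating Proposition~\ref{prop:fixed-realization} still requires choosing some $w\in x\st y$ with $t\in w\st z$, which is precisely the converse step. Accordingly, the paper derives Corollary~\ref{cor:finite-overlap} from this theorem rather than the reverse.
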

\begin{proof}
Let $w\in(x\st y)\st z$. Then some $v\in x\st y$ satisfies $w\in v\st z$. Hence $v\ge x+y-1$, $w\ge v+z-1$, and $w\le v,z$, so $w\ge(x+y+z-2)^+$ and $w\le\min\{x,y,z\}$. Conversely assume these bounds and put $v=\max\{w,x+y-1,0\}$. Then $(x+y-1)^+\le v\le\min\{x,y\}$, so $v\in x\st y$. Also $w\le v,z$, and each term defining $v$ is at most $w+1-z$, hence $w\ge v+z-1$ and $w\in v\st z$. The interval is symmetric in $x,y,z$, hence also equals $x\st(y\st z)$, proving \eqref{eq:hyperassoc}.
\end{proof}

\begin{corollary}[Finite marginal overlap]\label{cor:finite-overlap}
For $n\ge2$, every bracketing of $x_1\st\cdots\st x_n$ equals the actual attainable spectrum
\begin{equation}
\left\{\mu\!\left(\bigwedge_{i=1}^n a_i\right):\mu(a_i)=x_i\right\}
=\left[\left(\sum_{i=1}^n x_i-(n-1)\right)^+,\min_{1\le i\le n}x_i\right].
\label{eq:finite-overlap}
\end{equation}
\end{corollary}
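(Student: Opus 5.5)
Two things need proof: every bracketing of $x_1\st\cdots\st x_n$ gives the interval $I_n:=[(\sum_i x_i-(n-1))^+,\min_i x_i]$, and the attainable spectrum of $\mu(\bigwedge_i a_i)$ equals the same interval. I will argue by induction on $n$. The case $n=2$ is \eqref{eq:frechet}, and $n=3$ is Theorem~\ref{thm:hyperassoc}.

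\emph{Bracketings.} Any bracketing of $n\ge3$ factors splits at its outermost product as $P\st Q$. Here $P$ is a bracketing of $x_1,\dots,x_k$ and $Q$ is a bracketing of $x_{k+1},\dots,x_n$. By induction $P=[L_1,m_1]$ and $Q=[L_2,m_2]$, with $L$ and $m$ given by the formula for the respective blocks. I first prove a lemma: for intervals $[L_1,m_1],[L_2,m_2]\subseteq[0,1]$,
\[
[L_1,m_1]\st[L_2,m_2]=\bigl[(L_1+L_2-1)^+,\min\{m_1,m_2\}\bigr].
\]
The inclusion $\subseteq$ follows from monotonicity of both endpoints of $u\st v$ in $u$ and $v$. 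For $\supseteq$, take $w$ in the right-hand side and choose $u=\max\{w,L_1\}$ and $v=\max\{w,L_2\}$. Then $u\in[L_1,m_1]$ and $v\in[L_2,m_2]$, and $w\le\min\{u,v\}$. It remains to check $w\ge u+v-1$ in each of the four cases:
\begin{itemize}
\item $u=w$, $v=w$: the condition is $w\le1$, which holds.
\item $u=w$, $v=L_2$: the condition is $L_2\le1$, which holds.
\item $u=L_1$, $v=w$: symmetric to the previous case.
\item $u=L_1$, $v=L_2$: the condition is $w\ge L_1+L_2-1$, which is assumed.
\end{itemize}
Substituting the block formulas, $(L_1+L_2-1)^+$ becomes $(\sum_i x_i-(n-1))^+$, because the positive part of a positive part collapses: $((s)^++(t)^+-1)^+=(s+t-1)^+$ whenever $s,t\le 1$. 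Here $s=\sum_{i\le k}x_i-(k-1)\le1$, and similarly for $t$. This is the one slightly delicate algebraic step, and I would verify it by cases on the signs of $s$ and $t$. The result is $I_n$ for every bracketing.

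\emph{Spectrum equals $I_n$.} For $\subseteq$, monotonicity gives $\mu(\bigwedge_i a_i)\le\min_i x_i$. Applying $\mu(a\wedge b)\ge\mu(a)+\mu(b)-1$ inductively gives $\mu(\bigwedge_i a_i)\ge\sum_i x_i-(n-1)$.

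For $\supseteq$, which I expect to be the main obstacle, I would avoid hyperoperation reasoning and use a direct construction via divisibility. Fix $w\in I_n$ and take $c$ with $\mu(c)=w$. Each $a_i$ will be $c\vee d_i$, where $d_i\le c^c$, $\mu(d_i)=x_i-w\ge0$, and $\bigwedge_i d_i=0$. The last condition is arranged by choosing the $d_i$ so that their complements $e_i:=c^c\setminus d_i$, of measure $(1-w)-(x_i-w)=1-x_i$, cover $c^c$. This is possible when $\sum_i(1-x_i)\ge1-w$, which is exactly the lower bound $w\ge\sum_i x_i-(n-1)$.

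Concretely, I would use divisibility repeatedly to place the $e_i$ consecutively inside $c^c$. Each $e_i$ takes the required measure $1-x_i\le1-w$ from the portion of $c^c$ not yet covered, topped up from already covered parts when little uncovered portion remains. The $e_i$ may overlap, and $e_1\vee\cdots\vee e_n=c^c$. Then $\bigwedge_i a_i=c\vee\bigwedge_i d_i=c$, which has measure $w$.

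Combining the two directions gives equality of all bracketings with the attainable spectrum.
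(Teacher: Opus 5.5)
Your proof is correct, but it is organized differently from the paper's. The paper computes the left-bracketed product by iterating Theorem~\ref{thm:hyperassoc} and cites associativity for the other bracketings. It proves attainment by induction on $n$: an already realized meet $c=\bigwedge_{i\le n}a_i$ is extended by one event using Proposition~\ref{prop:fixed-realization}. The spectrum then obeys $S_{n+1}=S_n\st x_{n+1}$ and is matched to the hyperproduct step by step.

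You instead show separately that every bracketing equals the interval $I_n$ of \eqref{eq:finite-overlap} and that the spectrum equals $I_n$. Your interval lemma $[L_1,m_1]\st[L_2,m_2]=[(L_1+L_2-1)^+,\min\{m_1,m_2\}]$, with the positive-part collapse, makes explicit the interval-level step that the paper's ``follows inductively'' leaves to the reader. Theorem~\ref{thm:hyperassoc} is stated for three points, and for $n\ge4$ the intermediate intervals can have width above $1/2$, so they are not binary overlap intervals. Your lemma also handles arbitrary bracketings directly rather than through generalized associativity of the union-extended hyperoperation.

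Your core-plus-cover realization $a_i=c\vee d_i$ is a one-shot construction. Its feasibility condition $\sum_i(1-x_i)\ge 1-w$ is visibly the Fr\'echet lower bound, which makes the extremal configurations transparent.

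The paper's route buys brevity and reuse. It shows every intermediate hyperproduct value as the measure of an actual partial meet of one consistent event family, and it carries over verbatim to Corollary~\ref{cor:ordered-finite}. Your greedy covering, with its case split on whether the uncovered remainder is large enough, would need rewriting there, since $G$ need not be totally ordered. One would take a piece of measure $(u-x_j)\wedge\mu(r)$ from the uncovered remainder $r$ and top up from the covered part.
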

\begin{proof}
The interval in \eqref{eq:finite-overlap} follows inductively from Theorem~\ref{thm:hyperassoc}. For realizability, suppose the $n$-fold value $v$ is realized by $c=\bigwedge_{i=1}^n a_i$. For any $w\in v\st x_{n+1}$, Proposition~\ref{prop:fixed-realization} supplies $a_{n+1}$ with $\mu(a_{n+1})=x_{n+1}$ and $\mu(c\wedge a_{n+1})=w$. Conversely every actual $(n+1)$-fold intersection has intermediate value $v=\mu(\bigwedge_{i=1}^n a_i)$ and hence belongs to $v\st x_{n+1}$. Induction gives equality; associativity removes bracketing dependence.
\end{proof}

\begin{proposition}[Canonical overlap hypermonoid]\label{prop:hypermonoid}
With set-valued multiplication $\st$, $[0,1]$ is a commutative associative hypermonoid with identity $1$ and absorbing element $0$: $x\st y=y\st x$, $x\st1=\{x\}$, and $x\st0=\{0\}$. The union hyperoperation $\boxp$ is its De Morgan dual under $N(x)=1-x$.
\end{proposition}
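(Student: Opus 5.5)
The plan is to read every claim of the proposition off the closed form \eqref{eq:frechet} together with Theorem~\ref{thm:hyperassoc}, so that no new measure-theoretic input is needed. Commutativity is immediate in two ways. At the level of definition \eqref{eq:star-def}, the set $\{\mu(a\wedge b)\}$ is symmetric because $a\wedge b=b\wedge a$. At the level of the formula, both endpoints $(x+y-1)^+$ and $\min\{x,y\}$ are symmetric in $x,y$. Associativity of the extended hyperoperation, including the convention $A\st z=\bigcup_{a\in A}(a\st z)$, is exactly the content of Theorem~\ref{thm:hyperassoc}. That theorem gives $(x\st y)\st z=x\st(y\st z)$ for all $x,y,z\in[0,1]$. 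I will also note that every $x\st y$ is a nonempty subset of $[0,1]$, since $(x+y-1)^+\le\min\{x,y\}$, so the hyperoperation is well defined as a map into nonempty subsets.

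For the identity and absorbing element I will substitute directly into \eqref{eq:frechet}:
\[
x\st1=[(x+1-1)^+,\min\{x,1\}]=[x,x]=\{x\},\qquad x\st0=[(x-1)^+,\min\{x,0\}]=\{0\}.
\]
As a sanity check at the Boolean level, if $\mu(b)=1$ then strict positivity gives $b=1$, so $a\wedge b=a$. If $\mu(b)=0$ then $b=0$. These agree with the formula. By commutativity the same identities hold on the left.

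For the De Morgan duality I will invoke the hyper-De Morgan identity stated before Proposition~\ref{prop:dependence}, namely $x\boxp y=1-((1-x)\st(1-y))$. It follows from $(a\vee b)^c=a^c\wedge b^c$, from $\mu(a^c)=1-\mu(a)$, and from the fact that $a\mapsto a^c$ is a bijection of $\cB$ carrying the fiber of $x$ onto the fiber of $1-x$, with Proposition~\ref{prop:complement} identifying $N$ as $1-x$. As an explicit check, $1-[(1-x-y)^+,\min\{1-x,1-y\}]=[\max\{x,y\},\min\{1,x+y\}]$, which is the stated value of $x\boxp y$.

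I do not expect a genuine obstacle; all the real work sits in Theorem~\ref{thm:hyperassoc}. The only point needing care is the bijection step in the duality argument. I must confirm that $\{\mu(a\vee b):\mu(a)=x,\mu(b)=y\}$ equals $\{1-\mu(a'\wedge b'):\mu(a')=1-x,\mu(b')=1-y\}$ exactly, and not merely as an inclusion. This holds because complementation is an involution on $\cB$, so every pair $(a',b')$ in the second set has the form $(a^c,b^c)$ for a pair $(a,b)$ in the first, and conversely.
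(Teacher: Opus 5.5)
Your proposal is correct and follows the paper's proof essentially step for step: commutativity from $a\wedge b=b\wedge a$, associativity from Theorem~\ref{thm:hyperassoc}, the identity and absorbing laws by substitution into \eqref{eq:frechet}, and the duality $x\boxp y=1-((1-x)\st(1-y))$ by complementing representatives in both directions. Your extra remarks (nonemptiness of each $x\st y$, the strict-positivity check, and the explicit interval computation) are correct but not needed, since the involution argument on representatives already establishes the set equality.
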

\begin{proof}
Commutativity follows from $a\wedge b=b\wedge a$, and associativity is Theorem~\ref{thm:hyperassoc}. By \eqref{eq:frechet}, $x\st1=[x,x]=\{x\}$ and $x\st0=[0,0]=\{0\}$. If $u\in x\boxp y$, choose representatives $a,b$ with $u=\mu(a\vee b)$. De Morgan gives $1-u=\mu(a^c\wedge b^c)\in(1-x)\st(1-y)$. The converse follows by complementing representatives, proving $x\boxp y=1-((1-x)\st(1-y))$.
\end{proof}

The upper endpoint of \eqref{eq:frechet} is $\TG(x,y)=\min\{x,y\}$, precisely Zadeh conjunction. Since Proposition~\ref{prop:complement} has already derived standard complement $N(x)=1-x$, its De Morgan dual is $1-\min\{1-x,1-y\}=\max\{x,y\}$, precisely Zadeh disjunction. Thus the three Zadeh operations arise together from maximal overlap and Boolean complementation. The lower endpoint gives $\TL(x,y)=(x+y-1)^+$, while independent product factors give $\TP(x,y)=xy$ by product-measure multiplicativity. These are the familiar \Luk{} and product alternatives \cite{Klement2000,Hajek1998}; here they correspond to minimum internal overlap and independent-factor composition rather than to the nesting semantics that selects Zadeh conjunction.

\begin{proposition}[Zadeh and alternative measure-derived selections]\label{prop:canonical-selections}
The lower and upper internal overlap selections are $\TL$ and $\TG$, while independent product composition gives $\TP$. Their standard residua are $x\Rightarrow_{\mathrm L}y=\min\{1,1-x+y\}$; $x\Rightarrow_{\mathrm G}y=1$ for $x\le y$ and $y$ otherwise; and $x\Rightarrow_{\mathrm P}y=1$ for $x\le y$ and $y/x$ otherwise. Only the \Luk{} residual negation equals the measure complement $1-x$ for all $x$.
\end{proposition}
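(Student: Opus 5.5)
The statement has three parts, and I would take them in order.

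\textbf{Selections.} The first part is essentially bookkeeping from \eqref{eq:frechet}. The lower endpoint of $x\st y$ is $(x+y-1)^+=\TL(x,y)$ and the upper endpoint is $\min\{x,y\}=\TG(x,y)$. Both are attained, and Proposition~\ref{prop:fixed-realization} shows they are attained even against a fixed representative.
\begin{itemize}
\item For $\TG$ the witness is the nested choice: take $a\le b$ or $b\le a$ via divisibility, as in Proposition~\ref{prop:order}.
\item For $\TL$ the witness is the choice with minimal overlap: put $b$ inside $a^c$ as far as possible, which is possible by divisibility.
\item For $\TP$ I would invoke product-measure multiplicativity. Alternatively, I would cite Proposition~\ref{prop:dependence}: $xy$ lies in the fiber and is the zero-dependence point.
\end{itemize}

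\textbf{Residua.} Here I would use the standard definition $x\Rightarrow y=\sup\{z: T(x,z)\le y\}$ and solve each inequality in $z$.
\begin{itemize}
\item \L{}ukasiewicz: $(x+z-1)^+\le y$ holds iff $z\le 1-x+y$ (when $y\ge 0$). Capping at $1$ gives $\min\{1,1-x+y\}$.
\item G\"odel: $\min\{x,z\}\le y$ holds for all $z$ when $x\le y$. Otherwise it forces $z\le y$, so the supremum is $y$.
\item Product: $xz\le y$ holds for all $z\in[0,1]$ when $x\le y$. Otherwise it forces $z\le y/x$, where $x>0$ because $x>y\ge0$.
\end{itemize}
In each case the supremum is attained, since these t-norms are continuous. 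This is the residuation property cited from \cite{Klement2000,Hajek1998}.

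\textbf{Residual negations.} Setting $y=0$ gives:
\begin{itemize}
\item $x\Rightarrow_{\mathrm L}0=\min\{1,1-x\}=1-x$ for every $x$.
\item $x\Rightarrow_{\mathrm G}0$ and $x\Rightarrow_{\mathrm P}0$ both equal $1$ at $x=0$ and $0$ for $x>0$. This is the G\"odel negation.
\end{itemize}
The G\"odel negation differs from $1-x$ at, for example, $x=1/2$, where it gives $0\neq 1/2$. So among the three, only the \L{}ukasiewicz residual negation coincides with the measure complement derived in Proposition~\ref{prop:complement}.

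\textbf{Main obstacle.} The computations are routine; the point needing care is interpretive. I must read ``selection'' as a single-valued map choosing a point of $x\st y$ and realized by an explicit event construction. I also have to check the edge cases $x=0$ and $y\ge x$ in the product residuum, so that the division $y/x$ is always legitimate.
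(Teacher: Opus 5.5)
Your proposal is correct and follows the paper's proof essentially step for step. You read $\TL$ and $\TG$ off as the endpoints of \eqref{eq:frechet} and obtain $\TP$ from product-measure multiplicativity. You get each residuum by solving $T(x,z)\le y$ for the greatest $z$, and setting $y=0$ separates the \L{}ukasiewicz negation $1-x$ from the G\"odel negation shared by $\TG$ and $\TP$. Your extra details (the explicit nested and disjoint witnesses, the check that $x>y\ge0$ forces $x>0$, and the counterexample at $x=1/2$) only make explicit what the paper leaves implicit.
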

\begin{proof}
The first two conjunctions are exactly the endpoints of \eqref{eq:frechet}; the third follows from $\mu_1(a)\mu_2(b)=\mu_1\otimes\mu_2(a\times b)$. A residuum is the greatest $z$ satisfying $T(x,z)\le y$. For $\TL$, $(x+z-1)^+\le y$ is equivalent to $z\le\min\{1,1-x+y\}$. For $\TG$, $\min\{x,z\}\le y$ allows every $z\le1$ when $x\le y$ and exactly $z\le y$ when $x>y$. For product, $xz\le y$ allows every $z\le1$ when $x\le y$ and gives $z\le y/x$ when $x>y$. Setting $y=0$ gives $1-x$ for the \Luk{} residuum; the G\"odel and product residual negations are $1$ at $x=0$ and $0$ for $x>0$.
\end{proof}

\begin{theorem}[Zadeh conjunction as the unique idempotent selection]\label{thm:idempotent}
If $\mu(a)\le\mu(b)$, then $\mu(a\wedge b)=\TG(\mu(a),\mu(b))$ iff $a\le b$. Moreover, if $T(x,y)\in x\st y$ is nondecreasing in each argument and idempotent, then $T=\TG$.
\end{theorem}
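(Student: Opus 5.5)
The first assertion follows from strict positivity, and the second from squeezing $T$ between the diagonal and the upper Fr\'echet endpoint.

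\emph{First assertion.} Assume $\mu(a)\le\mu(b)$, so $\TG(\mu(a),\mu(b))=\mu(a)$.

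If $a\le b$, then $a\wedge b=a$, and the equality holds trivially.

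Conversely, suppose $\mu(a\wedge b)=\mu(a)$. Since $a=(a\wedge b)\vee(a\setdiff b)$ is a disjoint decomposition, additivity gives $\mu(a\setdiff b)=\mu(a)-\mu(a\wedge b)=0$. Strict positivity then gives $a\setdiff b=0$, which means $a\le b$. This is the same mechanism already used at the end of Proposition~\ref{prop:order}.

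\emph{Second assertion.} Let $T(x,y)\in x\st y$ be nondecreasing in each argument and idempotent. Membership in $x\st y$ together with \eqref{eq:frechet} gives the upper bound $T(x,y)\le\min\{x,y\}$ immediately. It remains to prove the reverse inequality.

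Take $x\le y$ without loss of generality; the case $y\le x$ is handled the same way. Monotonicity in the second argument and idempotence give
\[
T(x,y)\ge T(x,x)=x=\min\{x,y\}.
\]
Combined with the upper bound, $T(x,y)=\min\{x,y\}=\TG(x,y)$.

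Commutativity of $T$ is not needed here, because monotonicity is assumed separately in each argument.

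The only point requiring care is the direction of the inequalities. The attainable set supplies the upper bound, while idempotence together with monotonicity supplies the lower bound. No step involves a genuine obstacle.
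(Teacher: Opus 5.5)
Your proof is correct and follows the paper's argument essentially step for step: strict positivity applied to $a\setdiff b$ for the first assertion, and the squeeze $x=T(x,x)\le T(x,y)\le\min\{x,y\}$ for uniqueness. Your remark that the case $y\le x$ uses monotonicity in the first argument is accurate. It is exactly what the paper's ``symmetric'' case amounts to, and it means commutativity of $T$ is indeed not needed.
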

\begin{proof}
If $a\le b$, then $a\wedge b=a$. Conversely, if $\mu(a\wedge b)=\mu(a)$, then $\mu(a\setdiff b)=0$, and strict positivity gives $a\le b$. For uniqueness, if $x\le y$, monotonicity and idempotence give $x=T(x,x)\le T(x,y)$, while \eqref{eq:frechet} gives $T(x,y)\le\min\{x,y\}=x$. Thus $T(x,y)=x=\TG(x,y)$; the case $y\le x$ is symmetric.
\end{proof}

\begin{corollary}[Incompatible Boolean laws after scalarization]\label{cor:incompatible-laws}
No nondecreasing overlap selection can simultaneously preserve idempotence $T(x,x)=x$ and complement annihilation $T(x,1-x)=0$ for every $x\in[0,1]$.
\end{corollary}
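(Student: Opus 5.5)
The plan is to derive a contradiction from Theorem~\ref{thm:idempotent}. Suppose $T(x,y)\in x\st y$ is nondecreasing in each argument and satisfies $T(x,x)=x$ for every $x\in[0,1]$. The uniqueness part of Theorem~\ref{thm:idempotent} then forces $T=\TG$, that is, $T(x,y)=\min\{x,y\}$ for all $x,y$. This step uses only monotonicity, idempotence, and the upper Fr\'echet bound in \eqref{eq:frechet}, so it applies to any such selection.

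Next I will evaluate complement annihilation at a single interior point. At $x=1/2$ we get $T(1/2,1-1/2)=\TG(1/2,1/2)=1/2\neq0$. More generally, for every $x\in(0,1)$ we have $T(x,1-x)=\min\{x,1-x\}>0$. So annihilation fails at every non-crisp degree, and it holds only at $x\in\{0,1\}$.

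For context, I will remark that both properties are individually achievable. Idempotence is realized by $\TG$. Annihilation is realized by $\TL$, since $\TL(x,1-x)=(x+1-x-1)^+=0$. The incompatibility therefore comes from combining them.

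I will also note the measure-theoretic reading of this. By \eqref{eq:frechet}, the fiber $x\st(1-x)=[0,\min\{x,1-x\}]$ contains $0$. Idempotence, however, pins the selection to the upper endpoint of this fiber rather than the lower one. This matches the earlier remark that $\mu(a\wedge a^c)=0$ is a relational fact that marginals cannot see.

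There is no real obstacle here. The only care needed is to confirm that the hypotheses of Theorem~\ref{thm:idempotent} are exactly those given, namely a nondecreasing selection from $\st$ that is idempotent.
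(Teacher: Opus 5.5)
Your proposal is correct and follows the paper's proof exactly: Theorem~\ref{thm:idempotent} forces $T=\TG$, and then $\TG(x,1-x)=\min\{x,1-x\}>0$ for any $0<x<1$ contradicts complement annihilation. Your side remarks are accurate but not needed for the proof: $\TL$ achieves annihilation, and idempotence pins the selection to the upper endpoint of $x\st(1-x)=[0,\min\{x,1-x\}]$.
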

\begin{proof}
By Theorem~\ref{thm:idempotent}, monotonicity and idempotence force $T=\TG$. But for $0<x<1$, $\TG(x,1-x)=\min\{x,1-x\}>0$.
\end{proof}
This is a direct structural cost of scalarization: the Boolean laws $a\wedge a=a$ and $a\wedge a^c=0$ cannot both be retained by a monotone single-valued selection after the identity/complement relation between representatives has been discarded.

For a chosen intersection selection $T$, Boolean valuation determines the corresponding union value as $U_T(x,y)=x+y-T(x,y)$. This does not by itself imply $U_T(x,y)=1-T(1-x,1-y)$. Requiring equality with the independently defined De Morgan dual is an additional compatibility condition, namely the classical Frank equation
\begin{equation}
T(x,y)+1-T(1-x,1-y)=x+y.
\label{eq:frank}
\end{equation}

\begin{proposition}[Valuation duality of scalar selections]\label{prop:valuation-duality}
If $T(x,y)\in x\st y$, then $U_T(x,y):=x+y-T(x,y)$ is an attainable union selection. Conversely every attainable union selection arises uniquely this way. The correspondence is order reversing. Moreover $U_T$ agrees with the De Morgan dual $1-T(1-x,1-y)$ iff \eqref{eq:frank} holds.
\end{proposition}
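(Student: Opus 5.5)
The plan is to work pointwise with the two hyperoperations, using the identity $\mu(a\vee b)+\mu(a\wedge b)=\mu(a)+\mu(b)$ together with the exact forms of $x\st y$ and $x\boxp y$. An "attainable union selection" is read as a function $U$ with $U(x,y)\in x\boxp y=[\max\{x,y\},\min\{1,x+y\}]$ for all $x,y$, and intersection selections are read analogously with $x\st y$.

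First, for a selection $T$ with $T(x,y)\in x\st y$, we show $U_T(x,y)\in x\boxp y$. There are two routes. The direct one applies the affine map $\phi_{x,y}(z)=x+y-z$ to the endpoints of \eqref{eq:frechet}. It sends $\min\{x,y\}$ to $\max\{x,y\}$, and it sends $(x+y-1)^+$ to $\min\{x+y,1\}$, which we check by splitting into the cases $x+y\le1$ and $x+y\ge1$. So $\phi_{x,y}$ is an order-reversing bijection of $x\st y$ onto $x\boxp y$. The conceptual route realizes $T(x,y)=\mu(a\wedge b)$ with $\mu(a)=x$ and $\mu(b)=y$ via Proposition~\ref{prop:fixed-realization}, so that $U_T(x,y)=\mu(a\vee b)$ by the valuation identity. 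I would state the bijection claim, since the converse needs it, and mention the realization as the semantic meaning.

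For the converse, take an attainable union selection $U$ and define $T(x,y):=x+y-U(x,y)=\phi_{x,y}^{-1}(U(x,y))$. This $T$ lies in $x\st y$, and $U_T=U$. Uniqueness holds because $T\mapsto U_T$ is injective pointwise: $T$ is recovered as $x+y-U_T$.

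Order reversal is immediate. If $T\le T'$ pointwise, then $U_T=x+y-T\ge x+y-T'=U_{T'}$.

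Finally, $U_T(x,y)=1-T(1-x,1-y)$ rearranges to $x+y-T(x,y)=1-T(1-x,1-y)$, which is exactly \eqref{eq:frank}. It is worth noting that the dual is itself attainable by the hyper-De Morgan identity of Proposition~\ref{prop:hypermonoid}, so the comparison is between two attainable selections.

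The only step requiring care is the endpoint computation showing that $\phi_{x,y}$ maps $x\st y$ exactly onto $x\boxp y$. This includes the case split on $x+y$ relative to $1$, and confirming that the lower endpoint $(x+y-1)^+$ goes to $\min\{1,x+y\}$. Everything else is algebra.
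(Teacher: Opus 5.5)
Your proposal is correct and follows essentially the same route as the paper: you subtract the Fr\'echet endpoints from $x+y$ to land exactly on the union interval $[\max\{x,y\},\min\{1,x+y\}]$, invert via $T=x+y-U_T$ to get bijectivity and order reversal, and observe that De Morgan agreement is a rearrangement of \eqref{eq:frank}. Your extra remarks on realization through Proposition~\ref{prop:fixed-realization} and on attainability of the De Morgan dual are correct but not needed for the argument.
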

\begin{proof}
By \eqref{eq:frechet}, $L\le T\le U$ with $L=(x+y-1)^+$ and $U=\min\{x,y\}$. Subtracting from $x+y$ gives $\max\{x,y\}\le U_T\le\min\{1,x+y\}$, exactly the attainable union interval. The inverse is $T=x+y-U_T$, so the correspondence is bijective and order reversing. The final assertion is algebraically equivalent to \eqref{eq:frank}.
\end{proof}

\begin{proposition}[Residual complement alone does not select \Luk]\label{prop:regraduation}
For $p>0$, let $h_p(x)=x^p/[x^p+(1-x)^p]$ and $T_p(x,y)=h_p^{-1}(\TL(h_p(x),h_p(y)))$. Then $T_p$ is a continuous t-norm and its residual negation is $1-x$. For $p\ne1$, $T_p$ is generally not $\TL$.
\end{proposition}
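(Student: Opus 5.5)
The plan is to exploit that $h_p$ is an order automorphism of $[0,1]$, so $T_p$ is an isomorphic copy of $\TL$. First I will check that $h_p$ is a continuous strictly increasing bijection of $[0,1]$ with $h_p(0)=0$ and $h_p(1)=1$. Writing $h_p(x)=1/(1+((1-x)/x)^p)$ for $0<x<1$, the ratio $(1-x)/x$ decreases strictly from $\infty$ to $0$. So $h_p$ increases strictly, and continuity together with the endpoint values gives surjectivity.

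Next, conjugating a continuous t-norm by an order automorphism yields a continuous t-norm. Commutativity and monotonicity transfer directly. Associativity follows because $h_p^{-1}$ and $h_p$ cancel in the nested expression. The boundary law holds since $T_p(x,1)=h_p^{-1}(\TL(h_p(x),1))=x$. Continuity holds because $T_p$ is a composition of continuous maps.

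For the residual negation, I will use that the residuum transforms by conjugation. Since $h_p$ is an order isomorphism, $T_p(x,z)\le y$ iff $\TL(h_p(x),h_p(z))\le h_p(y)$. Hence
\[
x\Rightarrow_p y=h_p^{-1}\bigl(h_p(x)\Rightarrow_{\mathrm L}h_p(y)\bigr).
\]
Setting $y=0$ and using Proposition~\ref{prop:canonical-selections} gives the residual negation $h_p^{-1}(1-h_p(x))$. The key step is the symmetry $h_p(1-x)=1-h_p(x)$. It holds because
\[
h_p(1-x)=\frac{(1-x)^p}{(1-x)^p+x^p}=1-h_p(x).
\]
Applying $h_p^{-1}$ then gives $h_p^{-1}(1-h_p(x))=1-x$.

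Finally, for $p\ne1$ a single explicit point shows $T_p\ne\TL$. I will take $x=y=3/4$. Since $\TL(3/4,3/4)=1/2$, and $T_p(3/4,3/4)=1/2$ would require $h_p(1/2)=\TL(h_p(3/4),h_p(3/4))$, I compute $h_p(1/2)=1/2$ and $h_p(3/4)=3^p/(3^p+1)$. The equation then reads $2\cdot 3^p/(3^p+1)-1=1/2$, i.e.\ $3^p/(3^p+1)=3/4$, i.e.\ $3^p=3$, i.e.\ $p=1$. So $T_p(3/4,3/4)\ne 1/2$ whenever $p\ne1$.

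The main obstacle is stating the residuum-conjugation step cleanly; it relies only on $h_p$ being an order isomorphism. The word ``generally'' in the statement is then witnessed by this explicit point.
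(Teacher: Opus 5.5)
Your proof is correct and follows the paper's route. Conjugation by the self-dual order automorphism $h_p$ transports the t-norm axioms and the residuum, and $h_p(1-x)=1-h_p(x)$ yields residual negation $1-x$. Your separating computation at $(3/4,3/4)$ is slightly stronger than the paper's, which exhibits only $p=2$ (where $T_2(3/4,3/4)=2/3$), since you show $T_p\ne\TL$ for every $p\ne1$.
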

\begin{proof}
The map $h_p$ is an order automorphism of $[0,1]$ satisfying $h_p(1-x)=1-h_p(x)$. Order conjugation preserves the t-norm axioms and transports residuation. Hence the residual negation of $T_p$ is $h_p^{-1}(1-h_p(x))=1-x$. For $p=2$, $h_2(3/4)=9/10$, so $T_2(3/4,3/4)=h_2^{-1}(4/5)=2/3$, whereas $\TL(3/4,3/4)=1/2$. Thus standard residual negation does not determine conjunction even within the continuous \Luk{} conjugacy class; without continuity there are further examples, notably nilpotent minimum \cite{Fodor1995}.
\end{proof}

\begin{theorem}[Measure-compatible residuated selection]\label{thm:measure-compatible}
If $T$ is residuated, $x\Rightarrow_T0=1-x$, and \eqref{eq:frank} holds, then $T=\TL$.
\end{theorem}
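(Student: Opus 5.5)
The plan is to avoid any appeal to the general classification of solutions of the Frank equation, and instead combine the two hypotheses directly. The residual negation tells us where $T$ vanishes. The Frank equation \eqref{eq:frank} then transports this zero set across the anti-diagonal to fix $T$ on the complementary region.

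First, I would unpack residuation as the adjunction
\begin{equation*}
T(x,z)\le y \iff z\le x\Rightarrow_T y,
\end{equation*}
the same characterization used in the proof of Proposition~\ref{prop:canonical-selections}, where $x\Rightarrow_T y$ is the greatest $z$ with $T(x,z)\le y$. Taking $y=0$ and using $x\Rightarrow_T 0=1-x$ gives, for all $x,z\in[0,1]$,
\begin{equation*}
T(x,z)=0 \iff z\le 1-x \iff x+z\le 1.
\end{equation*}
Only the forward direction ($x+z\le1\Rightarrow T(x,z)=0$) is needed below.

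Next comes a split into two cases according to the sign of $x+y-1$.
\begin{itemize}
\item If $x+y\le1$, the previous step gives $T(x,y)=0=\TL(x,y)$ directly.
\item If $x+y\ge1$, then $(1-x)+(1-y)\le1$. Applying the same step to the pair $(1-x,1-y)$ gives $T(1-x,1-y)=0$. Substituting into \eqref{eq:frank} yields $T(x,y)+1=x+y$, so $T(x,y)=x+y-1=\TL(x,y)$.
\end{itemize}
The two cases agree on the line $x+y=1$, where both give $0$. Hence $T=\TL$ on all of $[0,1]^2$.

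I do not expect a genuine obstacle. The only point needing care is stating precisely which consequence of residuation is used: that $T(x,z)=0$ for every $z$ up to the residual value $x\Rightarrow_T0$, not merely at that value. This is immediate from the adjunction, or equivalently from monotonicity plus the supremum being attained, which is what residuation (left-continuity) guarantees. I would also remark that continuity, associativity, or a Frank-family classification \cite{Frank1979} is never invoked. This contrasts with Proposition~\ref{prop:regraduation}, where the residual-negation hypothesis alone fails to force $\TL$; it is the valuation compatibility of Proposition~\ref{prop:valuation-duality} that supplies the missing constraint.
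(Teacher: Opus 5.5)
Your proof is correct and takes essentially the same route as the paper. Residuation with $x\Rightarrow_T 0=1-x$ gives $T(x,y)=0$ whenever $x+y\le 1$, and applying this to $(1-x,1-y)$ and substituting into \eqref{eq:frank} gives $T(x,y)=x+y-1$ when $x+y\ge 1$.
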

\begin{proof}
If $x+y\le1$, then $y\le1-x=x\Rightarrow_T0$, so residuation gives $T(x,y)=0$. If $x+y\ge1$, then $(1-x)+(1-y)\le1$, so the first case gives $T(1-x,1-y)=0$; equation~\eqref{eq:frank} yields $T(x,y)=x+y-1$. Hence $T(x,y)=(x+y-1)^+$.
\end{proof}

The product selection has a second measure-theoretic characterization stronger than the familiar sandwich $\TL\le\TP\le\TG$.

For the remainder of this section, write $L(x,y)=(x+y-1)^+$ and $U(x,y)=\min\{x,y\}$ for the two endpoints in \eqref{eq:frechet}.

\begin{proposition}[Pointwise minimax center]\label{prop:pointwise-minimax}
Fix marginals $(x,y)$. The unique scalar $r$ minimizing $\sup_{z\in[L(x,y),U(x,y)]}|z-r|$ is $[L(x,y)+U(x,y)]/2$, with minimum error $[U(x,y)-L(x,y)]/2=\min\{x,y,1-x,1-y\}/2$.
\end{proposition}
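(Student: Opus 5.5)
This is the elementary Chebyshev-center fact for a compact interval. The plan is to treat the fixed marginals as giving a closed interval $[L,U]$ with $L=L(x,y)\le U=U(x,y)$; the inequality $L\le U$ holds because the interval in \eqref{eq:frechet} is nonempty. We then analyze the function $\phi(r)=\sup_{z\in[L,U]}|z-r|$ directly.

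\emph{Step 1: closed form for $\phi$.} Since $z\mapsto|z-r|$ is convex, its supremum over $[L,U]$ is attained at an endpoint, so $\phi(r)=\max\{|r-L|,|U-r|\}$. Write $m=(L+U)/2$ and $w=(U-L)/2$. A short case check gives $\phi(r)=w+|r-m|$ on the whole real line. For $r\ge m$ the farther endpoint is $L$, and $r-L=w+(r-m)$. For $r\le m$ the farther endpoint is $U$, and $U-r=w+(m-r)$. This also covers $r$ outside $[L,U]$.

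\emph{Step 2: minimizer and uniqueness.} From $\phi(r)=w+|r-m|$ it is immediate that the minimum value is $w$, attained only at $r=m$. Uniqueness comes from the strict term $|r-m|>0$ for $r\ne m$. The degenerate case $L=U$ is included: there $w=0$, and $m$ is still the unique minimizer because $\phi(r)=|r-L|$.

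\emph{Step 3: evaluate the width.} It remains to identify $U-L=\min\{x,y\}-(x+y-1)^+$ with $\min\{x,y,1-x,1-y\}$. This is the width formula already stated after \eqref{eq:frechet}. For completeness we will verify it by cases, taking $x\le y$ by symmetry. If $x+y\le1$, the width is $x$, and $x\le y$ together with $x\le1-y$ gives $x\le1-x$, so $x=\min\{x,y,1-x,1-y\}$. If $x+y\ge1$, the width is $x-(x+y-1)=1-y$, and $1-y\le1-x$ together with $1-y\le x$ makes $1-y$ the minimum.

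The only step needing any care is Step 3, where the case split must correctly pick out the minimum of four quantities. Even that is routine, so we do not expect a genuine obstacle.
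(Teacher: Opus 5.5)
Your proposal is correct and follows essentially the same route as the paper, which reduces the worst-case error to the larger endpoint distance $\max\{r-L,\,U-r\}$ (valid for $r$ outside the interval too) and notes that it is uniquely minimized where the two endpoint errors balance. Your closed form $\phi(r)=w+|r-m|$ makes the uniqueness explicit. Your case check of the width fills in the identity that the paper takes from the remark following \eqref{eq:frechet}.
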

\begin{proof}
For any $r$, the worst error on the interval is $\max\{r-L(x,y),U(x,y)-r\}$, with the same conclusion if $r$ lies outside. This is uniquely minimized when the two endpoint errors are equal.
\end{proof}

\begin{theorem}[Sharp minimax scalar prediction]\label{thm:binary-minimax}
For any $F:[0,1]^2\to[0,1]$, let $\cE(F)=\sup_{a,b\in\cB}|\mu(a\wedge b)-F(\mu(a),\mu(b))|$. Then $\inf_F\cE(F)=1/4$. The same optimum holds when $F$ is restricted to t-norms, and $F(x,y)=xy$ attains it.
\end{theorem}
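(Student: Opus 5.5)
The proof splits into a lower bound valid for every $F$ and an upper bound achieved by $F(x,y)=xy$.

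\emph{Lower bound.} By equation~\eqref{eq:frechet}, every value in $[L(x,y),U(x,y)]$ is attained as $\mu(a\wedge b)$ with $\mu(a)=x$ and $\mu(b)=y$. Hence for any $F$ we have $\cE(F)\ge\sup_z|z-F(x,y)|$ over that interval, for every pair $(x,y)$. Proposition~\ref{prop:pointwise-minimax} shows this is at least $\min\{x,y,1-x,1-y\}/2$. Taking $x=y=1/2$, where the fiber is $[0,1/2]$, gives $\cE(F)\ge1/4$. The argument uses nothing about $F$, so the same lower bound holds for any restricted class, in particular for t-norms.

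\emph{Upper bound via product.} For $F=\TP$, again by \eqref{eq:frechet}, $\cE(\TP)=\sup_{x,y}\max\{U-xy,\,xy-L\}$. By Proposition~\ref{prop:dependence}, the first term is $\min\{x(1-y),y(1-x)\}$ and the second is $\min\{xy,(1-x)(1-y)\}$. I must show that both are at most $1/4$ on $[0,1]^2$.

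\begin{itemize}
\item For the first term, if $x\le y$ then $x(1-y)\le x(1-x)\le1/4$; the case $y\le x$ is symmetric.
\item For the second term, the geometric mean $\sqrt{xy(1-x)(1-y)}=\sqrt{x(1-x)}\sqrt{y(1-y)}\le1/4$. The minimum of two nonnegative numbers is at most their geometric mean, so the bound follows.
\end{itemize}

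Thus $\cE(\TP)\le1/4$. The point $x=y=1/2$ shows equality. Since $\TP$ is a t-norm, both infima equal $1/4$ and are attained by product.

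The only delicate step is bounding the lower dependence $\min\{xy,(1-x)(1-y)\}$ uniformly. A case split on $x+y\le1$ also works: there $xy\le((x+y)/2)^2\le1/4$, and otherwise the complement variables give the same bound. I would use the geometric-mean trick because it is cleanest.
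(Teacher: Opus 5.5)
Your proof is correct and follows the paper's argument: the lower bound comes from the fiber $[0,1/2]$ at $x=y=1/2$, and the upper bound comes from checking that both endpoint errors of $xy$ are at most $1/4$, with the same $x(1-y)\le x(1-x)$ step for the upper term. The only deviation is that you bound $\min\{xy,(1-x)(1-y)\}$ by the geometric mean $\sqrt{x(1-x)}\sqrt{y(1-y)}\le 1/4$, whereas the paper uses the case split on $x+y\le1$ (which you also mention); the two are equally valid.
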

\begin{proof}
At $x=y=1/2$, \eqref{eq:frechet} allows intersection values $0$ and $1/2$, so every prediction has error at least $1/4$. For $F(x,y)=xy$, every attainable $z$ lies in $[L(x,y),U(x,y)]$, so the worst error at $(x,y)$ is $\max\{xy-L(x,y),U(x,y)-xy\}$. If $x+y\le1$, then $xy-L(x,y)=xy\le1/4$; otherwise $xy-L(x,y)=(1-x)(1-y)\le1/4$. Assuming $x\le y$, $U(x,y)-xy=x(1-y)\le x(1-x)\le1/4$; the other case is symmetric. Thus $\cE(\TP)\le1/4$, and equality follows from the lower bound.
\end{proof}

\begin{theorem}[All globally minimax scalar predictors]\label{thm:all-minimax}
A predictor $F:[0,1]^2\to[0,1]$ satisfies $\cE(F)=1/4$ iff
$U(x,y)-1/4\le F(x,y)\le L(x,y)+1/4$ for every $(x,y)$.
\end{theorem}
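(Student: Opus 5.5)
The plan is to reduce the global error $\cE(F)$ to a pointwise quantity and then apply the value $\inf_F\cE(F)=1/4$ from Theorem~\ref{thm:binary-minimax}. By the sharp Fr\'echet formula \eqref{eq:frechet}, every value in $[L(x,y),U(x,y)]$ is attained as $\mu(a\wedge b)$ for some $a,b$ with $\mu(a)=x$, $\mu(b)=y$, and no other value is. So I would first write
\begin{equation*}
\cE(F)=\sup_{(x,y)\in[0,1]^2}\ \sup_{z\in[L(x,y),U(x,y)]}|z-F(x,y)|
=\sup_{(x,y)}\max\{F(x,y)-L(x,y),\,U(x,y)-F(x,y)\}.
\end{equation*}
The last step is the same observation used in Proposition~\ref{prop:pointwise-minimax}: the worst error over an interval is attained at an endpoint. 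This is valid for arbitrary $r\in[0,1]$, including $r$ outside the interval, because then one of the two differences is the distance to the far endpoint and the other is negative.

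Given this identity, $\cE(F)\le 1/4$ holds iff for every $(x,y)$ both $F(x,y)-L(x,y)\le 1/4$ and $U(x,y)-F(x,y)\le1/4$. That pair of inequalities is exactly the stated sandwich $U-1/4\le F\le L+1/4$. Theorem~\ref{thm:binary-minimax} gives $\cE(F)\ge 1/4$ for every $F$, so $\cE(F)=1/4$ iff $\cE(F)\le1/4$, which proves both directions. For the lower bound one can also see it directly: at $(1/2,1/2)$ the sandwich forces $F=1/4$, consistent with $U-L=1/2$ there.

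I would add a sanity remark that the band is nonempty everywhere. Since $U-L=\min\{x,y,1-x,1-y\}\le 1/2$, we have $U-1/4\le L+1/4$. Also $U-1/4\le 1$ and $L+1/4\ge0$, so the band meets $[0,1]$, and intersecting with the codomain constraint $F\in[0,1]$ causes no inconsistency.

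The main obstacle is the rigor of the first identity. One must check that the supremum over event pairs really ranges over exactly the fibers $\{(a,b):\mu(a)=x,\mu(b)=y\}$ for all $(x,y)\in[0,1]^2$; every pair is realized, by divisibility. One must also check that within each fiber the attained set of intersection measures is the full closed interval, so that the endpoints are attained and not merely approached. Both facts come from \eqref{eq:frechet} together with Proposition~\ref{prop:fixed-realization}. Once they are in place, the sup of the pointwise maximum is an honest pointwise bound, and no limiting argument is needed.
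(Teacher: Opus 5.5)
Your proposal is correct and follows essentially the same route as the paper. Both arguments reduce $\cE(F)$ to the pointwise endpoint error $\max\{F-L,\,U-F\}$ using attainability of both Fr\'echet endpoints, read off $\cE(F)\le 1/4$ as the stated sandwich, and invoke Theorem~\ref{thm:binary-minimax} for $\cE(F)\ge 1/4$; your nonemptiness remark plays the same logically optional role as the paper's observation that $U-L\le 1/2$ makes the constraints compatible.
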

\begin{proof}
For fixed $(x,y)$ every attainable intersection lies in $[L(x,y),U(x,y)]$, and both endpoints are attainable. Hence the worst error is exactly $\max\{|F(x,y)-L(x,y)|,|F(x,y)-U(x,y)|\}$. This is at most $1/4$ iff $F(x,y)\le L(x,y)+1/4$ and $F(x,y)\ge U(x,y)-1/4$. These constraints are compatible because $U(x,y)-L(x,y)\le1/2$. Imposing them for every pair is therefore equivalent to $\cE(F)\le1/4$, while Theorem~\ref{thm:binary-minimax} rules out any smaller global error.
\end{proof}

Thus product is distinguished by independence and is minimax-optimal, but minimaxity alone does not characterize it. The pointwise minimax predictor in Proposition~\ref{prop:pointwise-minimax} is the midpoint of the admissible interval, whereas product chooses the zero-dependence point of Proposition~\ref{prop:dependence}; these coincide only on special marginals.

\begin{proposition}[Pairwise optimality is not associative]\label{prop:midpoint-nonassoc}
Let $C(x,y)=\frac12[L(x,y)+U(x,y)]$ be the unique pointwise minimax predictor of Proposition~\ref{prop:pointwise-minimax}. Then $C$ is not associative and therefore is not a t-norm.
\end{proposition}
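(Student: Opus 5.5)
Since every t-norm is by definition associative, the whole statement follows from one explicit failure of associativity. I will therefore look for a single triple $(x,y,z)$ with $C(C(x,y),z)\ne C(x,C(y,z))$, using the formula $C(x,y)=\frac12[(x+y-1)^++\min\{x,y\}]$ from Proposition~\ref{prop:pointwise-minimax}.

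To guide the search I first note two harmless facts. First, $C(x,1)=\frac12(x+x)=x$, so $1$ is an identity for $C$. Second, $C$ is commutative. Hence triples containing $1$, or with all three entries equal, cannot separate the two bracketings. The natural place to look is where $C$ genuinely averages two distinct endpoints, namely interior marginals with $x+y$ on either side of $1$.

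I will use $x=y=\tfrac12$ and $z=\tfrac34$.

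For the left bracketing:
\[
C\bigl(\tfrac12,\tfrac12\bigr)=\tfrac12\bigl(0+\tfrac12\bigr)=\tfrac14,
\qquad
C\bigl(\tfrac14,\tfrac34\bigr)=\tfrac12\bigl(0+\tfrac14\bigr)=\tfrac18 .
\]

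For the right bracketing:
\[
C\bigl(\tfrac12,\tfrac34\bigr)=\tfrac12\bigl(\tfrac14+\tfrac12\bigr)=\tfrac38,
\qquad
C\bigl(\tfrac12,\tfrac38\bigr)=\tfrac12\bigl(0+\tfrac38\bigr)=\tfrac3{16}.
\]

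Since $\tfrac18\ne\tfrac3{16}$, $C$ is not associative, and so it is not a t-norm.

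The only real obstacle is choosing the triple, and that is handled by the identity and commutativity remarks above. Once a candidate is chosen, each step is a direct evaluation of the endpoints $L$ and $U$, taking care over which case of $(x+y-1)^+$ applies at each stage. I would also add a brief interpretive remark. Although $C$ is pointwise optimal for every pair, composing it pairwise does not reproduce the ternary midpoint of Theorem~\ref{thm:hyperassoc}: the ternary interval at $(\tfrac12,\tfrac12,\tfrac34)$ is $[0,\tfrac12]$, with midpoint $\tfrac14$, which neither bracketing yields.
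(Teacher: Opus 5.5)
Your proof is correct and takes the same approach as the paper: exhibit one explicit triple on which the two bracketings of $C$ differ. The paper uses $(\tfrac14,\tfrac12,\tfrac12)$ and gets $\tfrac1{16}$ versus $\tfrac18$; your triple $(\tfrac12,\tfrac12,\tfrac34)$, giving $\tfrac18$ versus $\tfrac3{16}$, works equally well, and your case handling of $(x+y-1)^+$ at each step is right.
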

\begin{proof}
One has $C(1/4,1/2)=1/8$ and $C(1/2,1/2)=1/4$. Hence $C(C(1/4,1/2),1/2)=1/16$, whereas $C(1/4,C(1/2,1/2))=1/8$.
\end{proof}

\begin{theorem}[Quantitative cost of the canonical Boolean remnants]\label{thm:canonical-cost}
For the three canonical scalar conjunctions, $\cE(\TL)=\cE(\TG)=1/2$ and $\cE(\TP)=1/4$. Consequently every nondecreasing idempotent overlap selection has worst-case error $1/2$.
\end{theorem}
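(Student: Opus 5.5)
The key observation is that, for every predictor $F$, the worst-case error splits into pointwise contributions:
\[
\cE(F)=\sup_{(x,y)\in[0,1]^2}\max\{|F(x,y)-L(x,y)|,\,|F(x,y)-U(x,y)|\}.
\]
This holds because every attainable value of $\mu(a\wedge b)$ lies in $x\st y=[L,U]$ by \eqref{eq:frechet}, and both endpoints are attained. The argument is the same one used in Theorem~\ref{thm:all-minimax}. So each canonical conjunction only requires a pointwise computation followed by a supremum over the unit square.

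For $\TP$ there is nothing new to do: Theorem~\ref{thm:binary-minimax} already gives $\cE(\TP)=1/4$.

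For $\TG=U$, the pointwise error reduces to
\[
U(x,y)-L(x,y)=\min\{x,y,1-x,1-y\},
\]
which is the width computed just after \eqref{eq:frechet}. This quantity is at most $1/2$, since $\min\{x,1-x\}\le 1/2$. At $x=y=1/2$ it equals exactly $1/2$: there $\TG$ predicts $1/2$, while disjoint events $a,b$ of measure $1/2$ have $\mu(a\wedge b)=0$. Such events exist by divisibility, taking $b=a^c$. Hence $\cE(\TG)=1/2$.

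For $\TL=L$, the same pointwise error $U-L$ appears, so the same bound $1/2$ holds. Equality is again attained at $(1/2,1/2)$, this time using $a=b$ with $\mu(a)=1/2$. Then $\mu(a\wedge b)=1/2$ while $\TL(1/2,1/2)=0$. Hence $\cE(\TL)=1/2$.

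The consequence then follows directly. Theorem~\ref{thm:idempotent} says that every nondecreasing idempotent selection $T$ with $T(x,y)\in x\st y$ equals $\TG$. Therefore its worst-case error is $\cE(\TG)=1/2$.

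There is no real obstacle in this argument. The only care needed is to justify that the supremum in $\cE$ is taken over actual events, and that the extremal endpoints at $(1/2,1/2)$ are realized by concrete events. Both of these facts come from the sharp attainability in \eqref{eq:frechet} together with divisibility. A minor point worth noting is that $1/2$ is also the largest possible error of any selection, because $U-L\le 1/2$ everywhere. So $\TL$ and $\TG$ are in fact worst among all overlap selections, which makes the contrast with the optimal value $1/4$ for $\TP$ sharp.
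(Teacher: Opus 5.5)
Your proof is correct and follows essentially the same route as the paper. In both, the error of either endpoint selection reduces pointwise to the width $U-L=\min\{x,y,1-x,1-y\}\le 1/2$, with equality at $x=y=1/2$; product is handled by Theorem~\ref{thm:binary-minimax} and the consequence by Theorem~\ref{thm:idempotent}, and your explicit witnesses $b=a^c$ and $b=a$ with $\mu(a)=1/2$ just make concrete the sharp attainability the paper invokes.
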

\begin{proof}
At fixed marginals, predicting the lower endpoint $L(x,y)$ makes the upper endpoint the worst case, while predicting $U(x,y)$ makes the lower endpoint the worst case. Thus both errors equal $U(x,y)-L(x,y)=\min\{x,y,1-x,1-y\}\le1/2$, and equality is attained at $x=y=1/2$. Product has sharp error $1/4$ by Theorem~\ref{thm:binary-minimax}. Finally Theorem~\ref{thm:idempotent} identifies every nondecreasing idempotent overlap selection with $\TG$, so each has the same $1/2$ worst-case error.
\end{proof}

\begin{theorem}[Sharp $n$-ary minimax prediction]\label{thm:nary-minimax}
For $n\ge2$ and $F:[0,1]^n\to[0,1]$, define
$\cE_n(F)=\sup_{a_1,\ldots,a_n}|\mu(\bigwedge_{i=1}^n a_i)-F(\mu(a_1),\ldots,\mu(a_n))|$. Then $\inf_F\cE_n(F)=(n-1)/(2n)$. For fixed marginals the unique minimax predictor is the midpoint of the finite overlap interval in Corollary~\ref{cor:finite-overlap}.
\end{theorem}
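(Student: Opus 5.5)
The plan follows the binary case (Proposition~\ref{prop:pointwise-minimax} and Theorem~\ref{thm:binary-minimax}), with Corollary~\ref{cor:finite-overlap} supplying the exact attainable spectrum. Write $s=\sum_i x_i$, $L_n=(s-(n-1))^+$ and $U_n=\min_i x_i$. By Corollary~\ref{cor:finite-overlap}, for fixed marginals the set of attainable values of $\mu(\bigwedge_i a_i)$ is exactly $[L_n,U_n]$, and both endpoints are attained. So, exactly as in Proposition~\ref{prop:pointwise-minimax}, the worst error of a prediction $r$ at $(x_1,\dots,x_n)$ is $\max\{|r-L_n|,|r-U_n|\}$. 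This is uniquely minimized at the midpoint $(L_n+U_n)/2$, with value $(U_n-L_n)/2$. That settles the pointwise uniqueness claim, and it gives $\inf_F\cE_n(F)=\tfrac12\sup_{x\in[0,1]^n}(U_n-L_n)$. The infimum is attained by the midpoint predictor, which does map into $[0,1]$.

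The remaining task is to compute the maximal width $W_n:=\sup(U_n-L_n)$ and show $W_n=(n-1)/n$. For the lower bound, take all $x_i=t:=(n-1)/n$. Then $s=n-1$, so $L_n=0$ and $U_n=(n-1)/n$.

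For the upper bound, set $m=U_n=\min_i x_i$. Since $L_n\ge s-(n-1)$, we get $U_n-L_n\le m-s+n-1$. Order the marginals so that $x_1=m$ is the smallest. Then $s\ge m+(n-1)m=nm$ only bounds $s$ from below by something in terms of $m$, which is the wrong direction for a pure width bound. Instead, use $L_n\ge0$ and $L_n\ge s-(n-1)$ together and split into two cases.

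In the first case, $s\le n-1$. Then $L_n=0$ and the width is $m$. Since $nm\le s\le n-1$, we get $m\le (n-1)/n$.

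In the second case, $s\ge n-1$. The width is $m-s+n-1$. Here $s\ge m+\sum_{i\ge2}x_i\ge nm$, so the width is at most $n-1-(n-1)m$. The case hypothesis $s\ge n-1$ alone does not bound $m$ from below, so the estimate needs more care. One has $m\ge s-(n-1)$, because each $x_i\le1$ gives $s\le m+(n-1)$. Then $m-s+(n-1)\le m-nm+(n-1)=(n-1)(1-m)$. This is small when $m$ is large, but it is not yet below $(n-1)/n$ for small $m$. To close the gap, combine it with the trivial bound $\text{width}\le m$, which holds since $L_n\ge0$. Then $\text{width}\le\min\{m,(n-1)(1-m)\}$, and this is maximized where $m=(n-1)(1-m)$, i.e. $m=(n-1)/n$, giving $(n-1)/n$. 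In fact this single bound handles both cases at once, and it is the step I expect to need the most care: the bound $s\ge nm$ must be used in the direction that keeps $s-(n-1)$ comparable to $m$.

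Combining the two bounds, $W_n=(n-1)/n$, so $\inf_F\cE_n(F)=(n-1)/(2n)$. At $n=2$ this gives $1/4$, consistent with Theorem~\ref{thm:binary-minimax}. Uniqueness of the pointwise minimax predictor is the midpoint statement established above.
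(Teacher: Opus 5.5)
Your proposal is correct and matches the paper's proof. You both take the exact attainable interval $[L_n,U_n]$ from Corollary~\ref{cor:finite-overlap} to get pointwise midpoint optimality, then bound the width by $U_n-L_n\le\min\{m,(n-1)(1-m)\}\le(n-1)/n$, with equality at $x_i=(n-1)/n$ (the paper splits on $L_n=0$ versus $L_n>0$, which is equivalent to your split on $s$). Your worry that $s\ge nm$ points in the ``wrong direction'' is unfounded: it is exactly the inequality that yields $(n-1)(1-m)$, so that detour can be cut.
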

\begin{proof}
Put $L_n=(\sum_{i=1}^n x_i-(n-1))^+$ and $U_n=\min_{1\le i\le n}x_i$. Corollary~\ref{cor:finite-overlap} makes the attainable intersection exactly $[L_n,U_n]$, so the pointwise minimax error is $(U_n-L_n)/2$. Let $m=U_n$. If $L_n=0$, then $nm\le\sum_{i=1}^n x_i\le n-1$, hence $U_n-L_n=m\le(n-1)/n$. If $L_n>0$, then $U_n-L_n=m-\sum_{i=1}^n x_i+n-1\le(n-1)(1-m)$, while trivially $U_n-L_n\le m$; hence it is at most $\min\{m,(n-1)(1-m)\}\le(n-1)/n$. Equality holds at $x_1=\cdots=x_n=(n-1)/n$.
\end{proof}

\begin{theorem}[Exact $n$-ary product error]\label{thm:nary-product}
For $P_n(x_1,\ldots,x_n)=\prod_{i=1}^n x_i$ and $n\ge2$,
$\cE_n(P_n)=(n-1)n^{-n/(n-1)}$. Hence product is minimax among arities $n\ge2$ exactly when $n=2$; its worst-case error tends to $1$, whereas the unrestricted minimax error in Theorem~\ref{thm:nary-minimax} tends to $1/2$.
\end{theorem}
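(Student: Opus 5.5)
By Corollary~\ref{cor:finite-overlap}, at fixed marginals $x=(x_1,\ldots,x_n)$ the attainable intersection values fill exactly $[L_n,U_n]$, and both endpoints are attained. So the worst error of $P_n$ at $x$ is
\[
\max\{P_n-L_n,\;U_n-P_n\},
\]
and $\cE_n(P_n)$ is the supremum of this quantity over $[0,1]^n$. The plan is to bound each branch separately, show the upper branch attains the claimed value, and then compare with Theorem~\ref{thm:nary-minimax}.

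\textbf{Upper branch.} By symmetry let $m=x_1=U_n$. Then $U_n-P_n=m\bigl(1-\prod_{i\ge2}x_i\bigr)$. Since each $x_i\ge m$, this is at most $m(1-m^{n-1})=m-m^n$, with equality when all $x_i=m$. Maximizing over $m$ gives $m^{n-1}=1/n$, and the value is
\[
m\Bigl(1-\tfrac1n\Bigr)=\tfrac{n-1}{n}\,n^{-1/(n-1)}=(n-1)n^{-n/(n-1)}.
\]
This step is a direct computation.

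\textbf{Lower branch.} This is the step I expect to need the most care: we must show $P_n-L_n\le(n-1)n^{-n/(n-1)}$.
\begin{itemize}
\item Where $L_n=0$, i.e.\ $\sum x_i\le n-1$: maximize $\prod x_i$ under this constraint. By AM--GM the maximum is $((n-1)/n)^n$. One must check $((n-1)/n)^n\le (n-1)n^{-n/(n-1)}$. For $n=2$ both sides equal $1/4$. In general, set $q=(n-1)/n$; the inequality becomes $q^{n-1}\le n^{-1/(n-1)}$, i.e.\ $(1-1/n)^{(n-1)^2}\le 1/n$. I would prove this via $\ln(1-1/n)\le -1/n$, which gives the left side at most $e^{-(n-1)^2/n}$, and then compare with $1/n$. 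This settles large $n$; small $n$ I would check directly.
\item Where $L_n>0$: substitute $y_i=1-x_i$ with $\sum y_i<1$, so $P_n-L_n=\prod(1-y_i)-1+\sum y_i$. Bonferroni gives $\prod(1-y_i)\le 1-\sum y_i+\sum_{i<j}y_iy_j$, hence $P_n-L_n\le\sum_{i<j}y_iy_j\le\tfrac{n-1}{2n}\bigl(\sum y_i\bigr)^2$. At the boundary $\sum y_i=1$ this bound reduces to the $L_n=0$ case. A cleaner route is to note that $P_n-L_n$ is convex-free in the right direction along the constraint, so its maximum occurs on the face $\sum x_i=n-1$. If that fails, I would apply Lagrange multipliers: they force all $x_i$ equal, leaving a one-variable check.
\end{itemize}
Together the two branches give $\cE_n(P_n)=(n-1)n^{-n/(n-1)}$.

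\textbf{Comparison with the minimax error.} Compare $(n-1)n^{-n/(n-1)}$ with $(n-1)/(2n)$ from Theorem~\ref{thm:nary-minimax}. Equality is equivalent to $n^{-1/(n-1)}=1/2$, which holds only at $n=2$. For $n\ge3$ the exponent satisfies $1/(n-1)<1$ while $\log_2 n>1$ in the relevant sense, so $n^{1/(n-1)}<2$ and the product error strictly exceeds the minimax value. (Concretely, $n=3$ gives $\sqrt3<2$, and $n^{1/(n-1)}$ is decreasing in $n$.) Finally, $n^{-1/(n-1)}\to1$, so the product error tends to $1$, whereas $(n-1)/(2n)\to1/2$.
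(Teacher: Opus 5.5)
Your upper branch, the $L_n=0$ half of the lower branch, and the final comparison follow the paper's proof. The subcase $L_n>0$, however, is not actually closed. The sentence ``at the boundary $\sum y_i=1$ this bound reduces to the $L_n=0$ case'' is false. At $\sum y_i=1$ your Bonferroni bound equals $\frac{n-1}{2n}$, and for $n\ge3$ this strictly exceeds $((n-1)/n)^n$ (already $1/3>8/27$ at $n=3$), so it does not reduce to the AM--GM estimate. ``Convex-free in the right direction'' is not an argument, and the Lagrange fallback is left conditional. As written, you never obtain $P_n-L_n\le(n-1)n^{-n/(n-1)}$ on $\{\sum x_i>n-1\}$.

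The repair is short, and any of three routes works.

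(a) Your own Bonferroni bound already suffices. There $\sum y_i<1$, so $P_n-L_n<\frac{n-1}{2n}$. The inequality $\frac{n-1}{2n}\le\frac{n-1}{n}n^{-1/(n-1)}$ is exactly $n^{1/(n-1)}\le2$, i.e.\ $n\le2^{n-1}$, which your comparison paragraph establishes.

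(b) The face reduction you guessed is true for a one-line reason. With $s=\sum x_i$, on $\{s\ge n-1\}$ one has $P_n-L_n=\prod_i x_i-s+n-1$ and $\partial_{x_j}(\prod_i x_i-s)=\prod_{i\ne j}x_i-1\le0$. So lowering coordinates until $s=n-1$ never decreases $P_n-L_n$.

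(c) The paper avoids the case split entirely. AM--GM gives $\prod_i x_i\le(s/n)^n$ for every $s$, so $P_n-L_n\le(s/n)^n-(s-n+1)^+$. For $s\ge n-1$ this function has derivative $(s/n)^{n-1}-1\le0$, so its supremum is $((n-1)/n)^n$, attained at $s=n-1$. The paper then compares with the upper branch via $q^{n-1}\le\frac12\le n^{-1/(n-1)}$ with $q=(n-1)/n$. This is quicker than your exponential estimate and needs no separate check of $n=2$.

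One smaller point: in your comparison paragraph, the remark that ``$1/(n-1)<1$ while $\log_2 n>1$'' proves nothing. The actual argument is your parenthetical (monotonicity of $n^{1/(n-1)}$, with value $\sqrt3<2$ at $n=3$), or simply $n<2^{n-1}$ for $n\ge3$.
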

\begin{proof}
Let $m=\min_{1\le i\le n}x_i$, $s=\sum_{i=1}^n x_i$, $L=(s-n+1)^+$, $U=m$, and $\Pi=\prod_{i=1}^n x_i$. Since $1-\Pi\le\sum_{i=1}^n(1-x_i)$, one has $\Pi\ge L$, while $\Pi\le U$. Thus the error is $\max\{U-\Pi,\Pi-L\}$. Because every $x_i\ge m$, $U-\Pi\le m-m^n$, with equality when all $x_i=m$; maximizing gives $(n-1)n^{-n/(n-1)}$. By AM--GM, $\Pi-L\le(s/n)^n-(s-n+1)^+$, whose supremum is $((n-1)/n)^n$. With $q=(n-1)/n$ and $q^{n-1}\le1/2$, the first supremum dominates; for $n>2$ it is strictly larger than the minimax value $q/2$. The case $n=2$ is Theorem~\ref{thm:binary-minimax}.
\end{proof}

\section{Ordered-Valued Measures}\label{sec:ordered}
Let $(G,u)$ be a unital Abelian $\ell$-group and $\Gamma(G,u)=[0,u]_G$ \cite{Mundici1986,Fuchs1963,Glass1999}. Let $\mu:\cB\to G$ be positive, finitely additive on disjoint joins, strictly positive, normalized by $\mu(1)=u$, and interval divisible: whenever $0\le g\le\mu(a)$ there is $b\le a$ with $\mu(b)=g$. Define \eqref{eq:star-def} for $x,y\in\Gamma(G,u)$, and write $[r,s]_G=\{g\in G:r\le g\le s\}$. In this section $\wedge,\vee$ on $G$ denote lattice operations; on $\cB$ they retain their Boolean meaning.

\begin{theorem}[Ordered overlap spectrum]\label{thm:ordered-spectrum}
For $x,y\in\Gamma(G,u)$,
$x\st y=[(x+y-u)\vee0,\ x\wedge y]_G$.
\end{theorem}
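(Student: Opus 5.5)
Two inclusions are needed; throughout, lattice operations are taken in $G$. The Fréchet argument carries over verbatim except where it implicitly used total order.

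For soundness, take $a,b\in\cB$ with $\mu(a)=x$ and $\mu(b)=y$, and set $z=\mu(a\wedge b)$. Positivity and monotonicity give $z\le x$ and $z\le y$, hence $z\le x\wedge y$ in the lattice. Finite additivity on disjoint joins gives the valuation identity $\mu(a\vee b)+\mu(a\wedge b)=\mu(a)+\mu(b)$: decompose $a\vee b=(a\setdiff b)\vee(a\wedge b)\vee(b\setdiff a)$. Since $\mu(a\vee b)\le u$, we get $z\ge x+y-u$. Together with $z\ge0$ this yields $z\ge(x+y-u)\vee0$. Thus $x\st y\subseteq[(x+y-u)\vee0,\,x\wedge y]_G$.

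For attainability I would copy Proposition~\ref{prop:fixed-realization}. Fix $a$ with $\mu(a)=x$; it exists by interval divisibility applied inside $1$, since $0\le x\le u$. Let $z\in G$ satisfy $(x+y-u)\vee0\le z\le x\wedge y$. Then $0\le z\le x=\mu(a)$, so divisibility gives $p\le a$ with $\mu(p)=z$. Next put $g=y-z$. From $z\le y$ we get $g\ge0$. From $z\ge x+y-u$ we get $g\le u-x=\mu(a^c)$, where $\mu(a^c)=u-x$ follows from additivity. So divisibility gives $q\le a^c$ with $\mu(q)=y-z$. Setting $b=p\vee q$, the pieces are disjoint, so $\mu(b)=y$, and $a\wedge b=p$ has measure $z$.

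The only place requiring care is that the lower bound is a lattice join rather than a case split. In a non-totally-ordered $G$, the inequality $z\ge(x+y-u)\vee0$ must be read as the two separate inequalities $z\ge x+y-u$ and $z\ge0$. This is exactly what both directions use: the universal property of $\vee$ in soundness, and its unpacking in attainability. The upper bound $x\wedge y$ is handled the same way. No linearity is invoked, so the argument goes through for arbitrary unital Abelian $\ell$-groups.
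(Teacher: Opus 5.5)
Your proof is correct and is essentially the paper's argument. The soundness half is identical. Your attainability step (fix $a$, then split $a$ and $a^c$ by interval divisibility) yields the same four-cell partition the paper builds directly, with $e_{11}=p$, $e_{10}=a\setdiff p$, $e_{01}=q$, $e_{00}=a^c\setdiff q$. As a small bonus, your ordering realizes every admissible value against an arbitrary fixed representative $a$, which is the form the paper later re-derives inline in Corollary~\ref{cor:ordered-finite}.
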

\begin{proof}
If $g=\mu(a\wedge b)$, positivity gives $g\le x\wedge y$. Also $x+y-g=\mu(a\vee b)\le u$, so $g\ge x+y-u$, and $g\ge0$. Conversely let $(x+y-u)\vee0\le g\le x\wedge y$. Then $g$, $x-g$, $y-g$, and $u-x-y+g$ are positive and sum to $u$. Repeated interval divisibility produces disjoint pieces $e_{11},e_{10},e_{01},e_{00}$ with these measures. Setting $a=e_{11}\vee e_{10}$ and $b=e_{11}\vee e_{01}$ gives the claim.
\end{proof}

\begin{corollary}[Ordered extremal operations]\label{cor:ordered-extremal}
The least attainable intersection is $x\odot y=(x+y-u)\vee0$, the greatest attainable union is $x\oplus y=(x+y)\wedge u$, and Boolean complement induces $\neg x=u-x$. These are exactly the standard operations of $\Gamma(G,u)$.
\end{corollary}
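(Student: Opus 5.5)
The plan is to read all three operations off Theorem~\ref{thm:ordered-spectrum} and then compare them with the definitions of the MV-algebra $\Gamma(G,u)$.

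\emph{Least intersection.} Theorem~\ref{thm:ordered-spectrum} says $x\st y=[(x+y-u)\vee0,\ x\wedge y]_G$. This order interval has the least element $(x+y-u)\vee 0$. It is nonempty, because $(x+y-u)\vee0\le x\wedge y$: indeed $x+y-u\le x$ and $x+y-u\le y$ since $x,y\le u$, and $0\le x\wedge y$. So the least attainable intersection exists and equals $x\odot y$.

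\emph{Complement.} For $a$ with $\mu(a)=x$, finite additivity on the disjoint join $a\vee a^c=1$ gives $\mu(a^c)=u-x$. Every $x\in\Gamma(G,u)$ is attained by some $a$: apply interval divisibility to $1$, using $0\le x\le u=\mu(1)$. Hence the induced complement is well defined and equals $\neg x=u-x$. It is also unique, by the argument of Proposition~\ref{prop:complement}.

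\emph{Greatest union.} I will use two Boolean facts. First, the valuation identity $\mu(a\vee b)+\mu(a\wedge b)=x+y$, which follows by decomposing into the disjoint pieces $a\setdiff b$, $a\wedge b$, $b\setdiff a$. Second, the fact that $\mu(a\wedge b)$ ranges exactly over $x\st y$. Together these give that the union spectrum is $\{x+y-g: g\in x\st y\}$, and translation by $x+y$ is an order-reversing bijection. So the greatest union is $x+y-((x+y-u)\vee0)$. Since translation distributes over lattice operations in an $\ell$-group, and negation swaps $\vee$ and $\wedge$, this equals $(x+y-(x+y-u))\wedge(x+y)=u\wedge(x+y)=x\oplus y$.

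\emph{Identification with $\Gamma(G,u)$.} By Mundici's definition, $\Gamma(G,u)$ carries exactly $x\oplus y=(x+y)\wedge u$ and $\neg x=u-x$, with derived $x\odot y=\neg(\neg x\oplus\neg y)$. As a consistency check, I will verify $\neg(\neg x\oplus\neg y)=u-((2u-x-y)\wedge u)=(x+y-u)\vee0$, using the same distributivity identities. This shows that the ordered extremal operations coincide with the standard MV-operations.

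Nothing here is hard. The only step requiring care is the $\ell$-group identity $c-(p\vee q)=(c-p)\wedge(c-q)$; everything else follows directly from Theorem~\ref{thm:ordered-spectrum}.
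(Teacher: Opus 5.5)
Your proposal is correct and follows essentially the paper's own route. Like the paper, you read the least intersection off the lower endpoint of Theorem~\ref{thm:ordered-spectrum}, obtain the greatest union from $\mu(a\vee b)=x+y-g$ at the least admissible $g$, and get $\neg x=u-x$ from additivity over $a\vee a^c$. Your nonemptiness check, the existence of representatives via interval divisibility, and the explicit verification that $\neg(\neg x\oplus\neg y)=(x+y-u)\vee 0$ are details the paper leaves implicit or to its citation of Mundici.
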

\begin{proof}
The least intersection is the lower endpoint of Theorem~\ref{thm:ordered-spectrum}. For representatives of values $x,y$ with intersection value $g$, finite additivity gives $\mu(a\vee b)=x+y-g$. Hence the largest attainable union is obtained at the least admissible $g=(x+y-u)\vee0$ and equals $x+y-[(x+y-u)\vee0]=(x+y)\wedge u$. Finally $u=\mu(a)+\mu(a^c)$ gives $\mu(a^c)=u-\mu(a)$.
\end{proof}
The formulas are classical in $\Gamma(G,u)$ \cite{Mundici1986,Cignoli2000}; their derivation as extremal attainable overlaps is the point.

\begin{theorem}[Ordered hyperassociativity]\label{thm:ordered-hyperassoc}
For $x,y,z\in\Gamma(G,u)$,
$(x\st y)\st z=x\st(y\st z)=[(x+y+z-2u)\vee0,\ x\wedge y\wedge z]_G$.
\end{theorem}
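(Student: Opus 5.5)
The plan is to repeat the argument of Theorem~\ref{thm:hyperassoc} inside $\Gamma(G,u)$, using Theorem~\ref{thm:ordered-spectrum} as the exact description of each binary fiber in place of \eqref{eq:frechet}. I will extend $\st$ to nonempty subsets of $\Gamma(G,u)$ by unions, as before. The only new ingredient is lattice-ordered arithmetic, where I use two facts. First, translation invariance gives $(p\vee q)+r=(p+r)\vee(q+r)$. Second, $p\vee q\vee s\le t$ holds iff each joinand is $\le t$. Every inequality between joins and meets will be checked joinand by joinand, so no total order is needed.

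\emph{Inclusion $\subseteq$.} Let $w\in(x\st y)\st z$, so $w\in v\st z$ for some $v\in x\st y$. By Theorem~\ref{thm:ordered-spectrum}, $v\ge x+y-u$, $w\ge v+z-u$, $w\ge0$, $w\le v\wedge z$ and $v\le x\wedge y$. Adding the first two bounds gives $w\ge x+y+z-2u$, hence $w\ge(x+y+z-2u)\vee0$. The upper bounds give $w\le x\wedge y\wedge z$.

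\emph{Inclusion $\supseteq$.} Take $w$ in the stated interval and set $v=w\vee(x+y-u)\vee0$; this is the direct analogue of the scalar witness. First I check that $v\in x\st y$. Clearly $v\ge(x+y-u)\vee0$. For $v\le x\wedge y$ I check each joinand: $w\le x\wedge y$ by hypothesis, $x+y-u\le x$ and $\le y$ because $x,y\le u$, and $0\le x\wedge y$. In particular $v\le u$, so $v\in\Gamma(G,u)$.

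Next I check that $w\in v\st z$. We have $w\ge0$, $w\le z$, and $w\le v$ since $w$ is a joinand of $v$. It remains to show $w\ge v+z-u$, i.e.\ $v\le w+u-z$, which I again check joinand by joinand:
\begin{itemize}
\item $w\le w+u-z$ because $z\le u$;
\item $x+y-u\le w+u-z$ is equivalent to $w\ge x+y+z-2u$, which is the hypothesis;
\item $0\le w+u-z$ because $w\ge0$ and $z\le u$.
\end{itemize}
Theorem~\ref{thm:ordered-spectrum} then gives $w\in v\st z\subseteq(x\st y)\st z$.

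\emph{Other bracketing.} The binary spectrum of Theorem~\ref{thm:ordered-spectrum} is symmetric in its two arguments, so $\st$ is commutative. Hence $x\st(y\st z)=(y\st z)\st x$. By the computation above, this equals $[(y+z+x-2u)\vee0,\ y\wedge z\wedge x]_G$, which is the same interval because the formula is symmetric in $x,y,z$.

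The main obstacle I expect is the converse step. In the scalar proof, ``each term defining $v$ is at most $w+1-z$'' silently uses the total order. Here $v$ must be a genuine element of $G$ lying in the right intervals even when $w$, $x+y-u$ and $0$ are incomparable, so the join characterization and translation invariance must be invoked explicitly. Beyond that, the only point to watch is that every intermediate element stays in $[0,u]_G$, so that Theorem~\ref{thm:ordered-spectrum} applies to it.
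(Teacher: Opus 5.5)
Your proof is correct and is essentially the paper's argument: you use the same witness $v=((x+y-u)\vee0)\vee w$ and the same joinand-by-joinand check that $v+z-u\le w$ (the paper phrases this via translation preserving joins), and you get the other bracketing from symmetry of the interval in the same way. Your explicit commutativity step $x\st(y\st z)=(y\st z)\st x$ simply spells out what the paper compresses into ``Symmetry proves associativity.''
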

\begin{proof}
If $v\in x\st y$ and $w\in v\st z$, then $w\le x\wedge y\wedge z$, $w\ge0$, and $w\ge v+z-u\ge x+y+z-2u$. Conversely take $w$ in the stated interval, put $L=(x+y-u)\vee0$, and $v=L\vee w$. Since $L,w\le x\wedge y$, Theorem~\ref{thm:ordered-spectrum} gives $v\in x\st y$. Also $w\le v\wedge z$. Translation preserves joins, so
$v+z-u=(x+y+z-2u)\vee(z-u)\vee(w+z-u)\le w$; hence $w\ge(v+z-u)\vee0$, and Theorem~\ref{thm:ordered-spectrum} gives $w\in v\st z$. Symmetry proves associativity.
\end{proof}

\begin{corollary}[Finite ordered overlap]\label{cor:ordered-finite}
For $n\ge2$, every bracketing of $x_1\st\cdots\st x_n$ equals the actual attainable spectrum
\begin{multline}
\left\{\mu\!\left(\bigwedge_{i=1}^n a_i\right):\mu(a_i)=x_i\right\}\\
=\left[\left(\sum_{i=1}^n x_i-(n-1)u\right)\vee0,\ \bigwedge_{i=1}^n x_i\right]_G.
\label{eq:ordered-finite}
\end{multline}
\end{corollary}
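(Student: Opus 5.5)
The plan follows the scalar Corollary~\ref{cor:finite-overlap}, with Theorems~\ref{thm:ordered-spectrum} and~\ref{thm:ordered-hyperassoc} in the roles of \eqref{eq:frechet} and Theorem~\ref{thm:hyperassoc}. The argument has three parts, all by induction on $n$: an interval formula for every bracketing, an ordered version of Proposition~\ref{prop:fixed-realization}, and the identification of the bracketed interval with the actual spectrum.

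\emph{Interval formula.} Write $L_n=(\sum_{i\le n}x_i-(n-1)u)\vee0$ and $U_n=\bigwedge_{i\le n}x_i$. I first show that the left-bracketed product $(\cdots(x_1\st x_2)\st\cdots)\st x_n$ equals $[L_n,U_n]_G$. The case $n=2$ is Theorem~\ref{thm:ordered-spectrum}. For the inductive step I repeat the proof of Theorem~\ref{thm:ordered-hyperassoc} with $L_n$ in place of $(x+y-u)\vee0$.

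\begin{itemize}
\item If $v\in[L_n,U_n]_G$ and $w\in v\st x_{n+1}$, then $w\le U_{n+1}$ and $w\ge v+x_{n+1}-u\ge\sum_{i\le n+1}x_i-nu$.
\item Conversely, given $w\in[L_{n+1},U_{n+1}]_G$, put $v=L_n\vee w$. Since $w\le U_n$, we have $v\in[L_n,U_n]_G$, and $w\le v\wedge x_{n+1}$.
\item Translation distributes over joins, so $v+x_{n+1}-u=(\sum_{i\le n+1}x_i-nu)\vee(x_{n+1}-u)\vee(w+x_{n+1}-u)\le w$. Hence $w\in v\st x_{n+1}$ by Theorem~\ref{thm:ordered-spectrum}.
\end{itemize}

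For other bracketings I induct on $n$. A bracketing splits as $P\st Q$, with $P$ a bracketing over a block of indices and $Q$ over the complementary block. By the inductive hypothesis, $P$ and $Q$ are ordered intervals of the same form. I then need to show that $[L_A,U_A]_G\st[L_B,U_B]_G=[L_n,U_n]_G$. The cleanest route avoids a separate calculation. Hyperassociativity extends from singletons to arbitrary nonempty subsets, because $\st$ on subsets is defined by unions, so $(A\st B)\st C=A\st(B\st C)$. Combined with commutativity, this gives independence of bracketing by the standard argument for associative hyperoperations. The left-bracketed formula above then applies to every bracketing.

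\emph{Fixed-representative realization.} Next I prove the ordered analogue of Proposition~\ref{prop:fixed-realization}. Fix $c$ with $\mu(c)=v$, and let $w\in v\st x$. Then $0\le w\le v$ and $0\le x-w\le u-v=\mu(c^c)$. Interval divisibility gives $p\le c$ with $\mu(p)=w$ and $q\le c^c$ with $\mu(q)=x-w$. Setting $b=p\vee q$, finite additivity gives $\mu(b)=x$ and $c\wedge b=p$.

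\emph{Identification with the spectrum.} Induction on $n$ then shows that the spectrum equals the left-bracketed product. Every actual $(n+1)$-fold intersection has intermediate value $v=\mu(\bigwedge_{i\le n}a_i)$ and therefore lies in $v\st x_{n+1}$. Conversely, every element of $v\st x_{n+1}$, with $v$ realized by some $c=\bigwedge_{i\le n}a_i$, is realized by the lemma. So the spectrum equals $[L_n,U_n]_G$ and every bracketing.

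The main obstacle is the lattice-ordered bookkeeping in the inductive step. In $G$ the order need not be total, so I cannot split into cases such as ``$L_n=0$ or not''. Every bound must be handled through joins, using the distributivity of translation over $\vee$ and the fact that $w\le U_n$ implies $L_n\vee w\le U_n$. That fact holds because $L_n\le U_n$: indeed $\sum_i x_i-(n-1)u\le x_j$ for each $j$, since every $x_i\le u$.
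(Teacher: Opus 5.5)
Your proof is correct and follows the paper's route: the interval formula by induction modeled on the hyperassociativity computation, the fixed-representative construction via interval divisibility inside $c$ and $c^c$, and induction on the intermediate intersection value to identify the bracketed product with the actual spectrum. Rerunning the join bookkeeping explicitly with $L_n$, and using setwise associativity for arbitrary bracketings, simply spells out what the paper compresses into ``follows by induction from Theorem~\ref{thm:ordered-hyperassoc}''.
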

\begin{proof}
The interval in \eqref{eq:ordered-finite} follows by induction from Theorem~\ref{thm:ordered-hyperassoc}. For realizability, suppose $v=\mu(c)$ is realized by $c=\bigwedge_{i=1}^n a_i$ and take $w\in v\st x_{n+1}$. Theorem~\ref{thm:ordered-spectrum} gives $0\le w\le v$ and $0\le x_{n+1}-w\le u-v=\mu(c^c)$. Interval divisibility yields disjoint $p\le c$, $q\le c^c$ with measures $w$ and $x_{n+1}-w$; then $a_{n+1}=p\vee q$ realizes $w$. The converse is immediate from the intermediate intersection value. Induction identifies the hyperoperation with one consistent event family.
\end{proof}

\begin{corollary}[Finite ordered union spectrum]\label{cor:ordered-union}
For $n\ge2$, the attainable values of $\mu(\bigvee_{i=1}^n a_i)$ under $\mu(a_i)=x_i$ form
$[\bigvee_{i=1}^n x_i,(\sum_{i=1}^n x_i)\wedge u]_G$.
\end{corollary}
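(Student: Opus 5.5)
The plan is to deduce the union spectrum from Corollary~\ref{cor:ordered-finite} by complementation, mirroring the binary hyper-De Morgan identity. Boolean complement is a bijection of $\cB$, and $\mu(a^c)=u-\mu(a)$ by finite additivity. So the map $a_i\mapsto a_i^c$ identifies families with $\mu(a_i)=x_i$ with families satisfying $\mu(a_i^c)=u-x_i$. De Morgan gives $\mu(\bigvee_i a_i)=u-\mu(\bigwedge_i a_i^c)$. Hence the set of attainable union values equals $u-S$, where $S$ is the attainable intersection spectrum for marginals $u-x_i$. Note that each $u-x_i$ lies in $\Gamma(G,u)$.

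Applying Corollary~\ref{cor:ordered-finite}, we get
\[
S=\Bigl[\Bigl(\sum_{i=1}^n(u-x_i)-(n-1)u\Bigr)\vee0,\ \bigwedge_{i=1}^n(u-x_i)\Bigr]_G=\Bigl[\Bigl(u-\sum_{i=1}^n x_i\Bigr)\vee0,\ u-\bigvee_{i=1}^n x_i\Bigr]_G.
\]
The second equality uses that $g\mapsto u-g$ is an order-reversing group automorphism of $G$, so it exchanges finite meets and joins: $\bigwedge_i(u-x_i)=u-\bigvee_i x_i$.

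Next, apply $g\mapsto u-g$ to the whole interval. It maps $[r,s]_G$ bijectively onto $[u-s,u-r]_G$, since in a partially ordered group $r\le g\le s$ iff $u-s\le u-g\le u-r$. The upper endpoint becomes
\[
u-\Bigl(\bigl(u-\textstyle\sum_i x_i\bigr)\vee0\Bigr)=\bigl(\textstyle\sum_i x_i\bigr)\wedge u,
\]
again by translation invariance and order reversal. The lower endpoint becomes $\bigvee_i x_i$, which is the claimed interval.

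There is no real obstacle here; the only point needing care is the lattice-group identities used above. These are $u-(h\vee0)=(u-h)\wedge u$ and $u-\bigwedge_i g_i=\bigvee_i(u-g_i)$, both of which hold in any $\ell$-group.

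As an alternative, one could argue directly, imitating Corollary~\ref{cor:ordered-finite}. Necessity would use monotonicity and subadditivity, and sufficiency would use the inductive construction with the binary union spectrum. The complementation route avoids redoing that induction, so I would use it.
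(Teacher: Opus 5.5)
Your proposal is correct and follows the paper's proof essentially verbatim: complement each $a_i$, apply Corollary~\ref{cor:ordered-finite} to the marginals $u-x_i$ to get the intersection interval $[(u-\sum_i x_i)\vee0,\,u-\bigvee_i x_i]_G$, and then reflect that interval through $g\mapsto u-g$. One small wording slip: $g\mapsto u-g$ is not a group automorphism, since it sends $0$ to $u$; it is an order-reversing affine bijection of $G$ (negation followed by translation), and order reversal is the only property your argument actually uses.
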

\begin{proof}
By De Morgan, $\mu(\bigvee_{i=1}^n a_i)=u-\mu(\bigwedge_{i=1}^n a_i^c)$ and $\mu(a_i^c)=u-x_i$. Applying \eqref{eq:ordered-finite} to the complements gives the intersection interval $[(u-\sum_{i=1}^n x_i)\vee0,\,u-\bigvee_{i=1}^n x_i]_G$. Subtracting this interval from $u$ reverses its endpoints and yields $[\bigvee_{i=1}^n x_i,(\sum_{i=1}^n x_i)\wedge u]_G$.
\end{proof}

\section{Local Versus Global Realization}\label{sec:global}
Equation~\eqref{eq:frechet} is a pointwise marginal statement. Simultaneous realization over all thresholds is stronger and is governed in two dimensions by copulas \cite{Sklar1959,Nelsen2006}.

\begin{proposition}[Canonical bivariate global realizations]\label{prop:bivariate-global}
The G\"odel, \Luk, and product selections admit simultaneous realization for every $(x,y)$ by comonotone, countermonotone, and independent event chains, respectively.
\end{proposition}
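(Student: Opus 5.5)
We interpret ``simultaneous realization'' as follows: one constructs two families of events $\{a_x\}_{x\in[0,1]}$ and $\{b_y\}_{y\in[0,1]}$ in $\cB$, with $\mu(a_x)=x$ and $\mu(b_y)=y$, such that $\mu(a_x\wedge b_y)=T(x,y)$ for all $(x,y)$ at once. The plan is to build a single measure-preserving ``uniform coordinate'' inside $\cB$ and take threshold events of it, exactly as copulas arise from uniform random variables.

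\textbf{Step 1: a nested chain.} Using divisibility repeatedly, construct an increasing family $\{c_t\}_{t\in[0,1]}$ with $\mu(c_t)=t$ and $c_s\le c_t$ for $s\le t$. First build it on dyadic rationals by induction: split $c_{k/2^n}$ and its successor difference using divisibility. Then extend to all $t$ by taking suprema of dyadic chain members below $t$. This step needs the measure algebra to be complete, or $\sigma$-complete with countable additivity. It is the main obstacle, because the paper has so far assumed only divisibility. If completeness is unavailable, I would fall back on realizing the chain inside a complete measure algebra isomorphic to the Lebesgue algebra of $[0,1]$. Alternatively, I would state the result for that canonical algebra, where $c_t=[0,t)$ works directly.

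\textbf{Step 2: comonotone chains give G\"odel.} Put $a_x=c_x$ and $b_y=c_y$. Nestedness gives $a_x\wedge b_y=c_{\min\{x,y\}}$, so $\mu(a_x\wedge b_y)=\TG(x,y)$. This is consistent with Theorem~\ref{thm:idempotent}, which shows that $\TG$ is attained exactly by nested pairs.

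\textbf{Step 3: countermonotone chains give \L{}ukasiewicz.} Put $a_x=c_x$ and $b_y=c_{1-y}^c$, so that $\mu(b_y)=y$. Then $a_x\wedge b_y=c_x\setdiff c_{1-y}$. This difference has measure $x-(1-y)$ when $1-y\le x$, and is $0$ otherwise, because the chain is nested. Hence $\mu(a_x\wedge b_y)=(x+y-1)^+=\TL(x,y)$.

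\textbf{Step 4: independent chains give product.} I need two chains that are independent of each other. Starting from the chain of Step 1, I would produce a measure-preserving embedding of the Lebesgue algebra of $[0,1]^2$. For each dyadic $n$, divisibility inside every piece $c_{(k+1)/2^n}\setdiff c_{k/2^n}$ gives sub-chains $d^{(k)}_s$ of proportional measure $s\cdot 2^{-n}$. I would build these consistently as $n$ grows, mirroring the binary-digit construction of the unit square. Setting $b_y=\bigvee_k d^{(k)}_y$ then gives $\mu(c_x\wedge b_y)=xy$, first for dyadic $x$ and then for all $x$ by monotone limits. The cleanest alternative is to invoke the classical isomorphism of a separable atomless normalized measure algebra with the Lebesgue algebra \cite{Halmos1950}, together with the fact that $([0,1]^2,\lambda\otimes\lambda)$ is isomorphic to $([0,1],\lambda)$. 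One then takes $a_x$ and $b_y$ as preimages of $[0,x)\times[0,1]$ and $[0,1]\times[0,y)$, and product-measure multiplicativity gives $\TP(x,y)=xy$.

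In copula language, Steps 2–4 realize the copulas $M$, $W$ and $\Pi$ \cite{Sklar1959,Nelsen2006}. I would finish with a remark to that effect.
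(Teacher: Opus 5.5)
Your Steps 2 and 3 are the paper's argument written abstractly. The paper works directly in $([0,1],\lambda)$ with $A_x=[0,x]$ and with $B_y=[0,y]$ or $B_y=[1-y,1]$; the latter is your $c_{1-y}^c$. For the product case it uses $([0,1]^2,\lambda_2)$ with the strips $[0,x]\times[0,1]$ and $[0,1]\times[0,y]$. So your fallback is exactly the paper's proof, and the completeness concern of Step 1 does not arise for the statement as the paper means it. If you do want chains inside an arbitrary $\cB$, Step 1 is fine: a measure algebra in Halmos's sense is $\sigma$-complete with countably additive $\mu$, and it is then complete by the countable chain condition.

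The genuine problem is your primary route in Step 4. You fix the chain $\{c_x\}$ from Step 1 and look for $b_y$ with $\mu(c_x\wedge b_y)=xy$ for all $x$. That requires $b_y$ to be independent of the whole chain, and such $b_y$ need not exist. Take $\cB$ to be the Lebesgue algebra of $[0,1]$ and $c_t=[0,t)$, a legitimate output of Step 1. The measures $e\mapsto\mu(e\wedge b_y)$ and $e\mapsto y\,\mu(e)$ agree on the $\pi$-system $\{c_x\}$, which generates $\cB$, so they agree everywhere. Taking $e=b_y$ gives $y=y^2$, a contradiction for $0<y<1$.

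The underlying reason is that once the chain is fixed, every finer piece $c_{(j+1)/2^{n+1}}\setdiff c_{j/2^{n+1}}$ is already determined. A subchain $d^{(k)}_s$ chosen at level $n$ therefore cannot be forced to split evenly across those pieces at every later level. In the example, the level-$n$ joins $\bigvee_k d^{(k)}_y$ can converge only weakly, to the constant density $y$, and not to an event.

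The repair is to build both chains at once. By divisibility, split $1$ into four pieces of measure $1/4$ indexed by dyadic squares, and refine every piece four ways at each level. Define $a_x$ and $b_y$ for dyadic $x,y$ as joins of the pieces lying in the corresponding strips. Then extend to all $x,y$ by suprema, using countable additivity and the infinite distributive law of the complete algebra. Your isomorphism route is also correct for separable $\cB$, and separability can be dropped because this tree gives a measure-preserving embedding of the square's algebra into any atomless $\cB$. For the proposition as stated, the concrete strips in $([0,1]^2,\lambda_2)$ already suffice.
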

\begin{proof}
On $([0,1],\lambda)$ let $A_x=[0,x]$. The comonotone chain $B_y=[0,y]$ gives $\lambda(A_x\cap B_y)=\min\{x,y\}=\TG(x,y)$. The countermonotone chain $B_y=[1-y,1]$ gives intersection length $\max\{0,x+y-1\}=\TL(x,y)$. For product, on $([0,1]^2,\lambda_2)$ take $A_x=[0,x]\times[0,1]$ and $B_y=[0,1]\times[0,y]$; independence of the coordinate strips gives $\lambda_2(A_x\cap B_y)=xy=\TP(x,y)$ for every pair of thresholds simultaneously.
\end{proof}
The comonotone construction is the global measure model of Zadeh conjunction: one nested chain realizes $\min\{x,y\}$ for every pair of degrees at once. Applying complements to the same chain realizes the associated maximum disjunction. This strengthens pointwise endpoint selection to a simultaneous semantics for the full Zadeh min--max calculus.
Hyperassociativity, however, does not promote the lower Fr\'echet endpoint to a multivariate copula.

\begin{proposition}[Three-dimensional obstruction]\label{prop:three-dim}
The iterated lower endpoint $W_3(x,y,z)=(x+y+z-2)^+$ cannot be realized as $\mu(A_x\wedge B_y\wedge C_z)$ for three increasing event chains with $\mu(A_x)=x$, $\mu(B_y)=y$, and $\mu(C_z)=z$ simultaneously for all $(x,y,z)\in[0,1]^3$.
\end{proposition}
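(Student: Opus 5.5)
Argue by contradiction via inclusion--exclusion on a single box. Suppose increasing chains $A_x,B_y,C_z$ realize $\mu(A_x\wedge B_y\wedge C_z)=W_3(x,y,z)$ for all $(x,y,z)$. Setting one coordinate equal to $1$ gives $\mu(A_1)=1$, hence $A_1=1$ by strict positivity, and likewise $B_1=C_1=1$. Consequently the two-dimensional margins are $\mu(A_x\wedge B_y)=W_3(x,y,1)=(x+y-1)^+$, and similarly for the other pairs. The function $H(x,y,z):=\mu(A_x\wedge B_y\wedge C_z)$ is then a genuine trivariate distribution function: it is the measure of the ``lower orthant'' event. For $x<x'$, $y<y'$ and $z<z'$, the event
\[
(A_{x'}\setdiff A_x)\wedge(B_{y'}\setdiff B_y)\wedge(C_{z'}\setdiff C_z)
\]
has nonnegative measure. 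Because the chains are increasing, inclusion--exclusion expresses this measure as the alternating sum of $H$ over the eight vertices of the box. So the $H$-volume of every box must be nonnegative; in other words, $H$ would have to be $3$-increasing.

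I will then exhibit a box of negative $W_3$-volume, which is the standard computation showing that $W_3$ is not a copula. I take the cube $[1/2,1]^3$. The vertex values are:
\begin{itemize}
\item $W_3(1,1,1)=1$;
\item $W_3=1/2$ at the three vertices with exactly two coordinates equal to $1$;
\item $W_3=(2-2)^+=0$ at the three vertices with exactly one coordinate equal to $1$;
\item $W_3(1/2,1/2,1/2)=0$.
\end{itemize}
The alternating sum is $1-3\cdot\tfrac12+3\cdot0-0=-\tfrac12<0$, so the corresponding event would have negative measure, which is impossible.

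The only point needing care is the inclusion--exclusion identity. Write $\alpha=A_{x'}\setdiff A_x$, and similarly $\beta,\gamma$ for the other two chains. Since $A_x\le A_{x'}$, the indicator of $\alpha$ in any Stone representation is $1_{A_{x'}}-1_{A_x}$. Expanding the product of the three differences and integrating term by term by finite additivity gives exactly the eight-term signed sum $\sum \pm H(\cdot,\cdot,\cdot)$. Alternatively, the identity can be checked directly by repeated use of $\mu(p\setdiff q)=\mu(p)-\mu(q)$ for $q\le p$, applied one coordinate at a time. Here the nesting of the chains is essential: it is what lets each difference be taken against a sub-event.

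I expect the main obstacle to be only this bookkeeping, not any conceptual step. In particular, the pointwise hyperassociativity of Theorem~\ref{thm:hyperassoc} guarantees that each individual value $W_3(x,y,z)$ is attainable; the obstruction arises solely from requiring one consistent family of events across the eight vertices of a box.
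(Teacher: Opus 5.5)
Your proof is correct and matches the paper's argument: the alternating $W_3$-volume of the cube $[1/2,1]^3$ would equal $\mu\bigl((A_1\setdiff A_{1/2})\wedge(B_1\setdiff B_{1/2})\wedge(C_1\setdiff C_{1/2})\bigr)\ge 0$, yet it evaluates to $1-3\cdot\tfrac12+0-0=-\tfrac12$. Your indicator-expansion justification of the inclusion--exclusion identity spells out a step the paper leaves implicit; the preliminary remarks about $A_1=1$ and the bivariate margins are harmless but not needed.
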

\begin{proof}
For any such chains, the alternating volume over $[r,1]^3$ equals $\mu((A_1\setdiff A_r)\wedge(B_1\setdiff B_r)\wedge(C_1\setdiff C_r))$ and is nonnegative. Taking $r=1/2$ and $W_3$ gives $1-3(1/2)+3(0)-0=-1/2$, a contradiction.
\end{proof}
This is the familiar dimension-dependent Fr\'echet/copula obstruction \cite{McNeil2009}, stated here to delimit the hyperalgebra correctly: associativity and pointwise sharp attainability describe unconstrained marginal composition, not preservation of all higher-order dependencies. Likewise, \eqref{eq:star-def} should not be recursively applied to repeated variables as if scalarization retained their identity.

\begin{corollary}[Dimension split of the canonical selections]\label{cor:dimension-split}
For every $n$, $G_n(x_1,\ldots,x_n)=\min_{1\le i\le n}x_i$ and $P_n(x_1,\ldots,x_n)=\prod_{i=1}^n x_i$ admit simultaneous realizations by $n$ increasing event chains. The lower endpoint $W_n=(\sum_{i=1}^n x_i-(n-1))^+$ does not admit such a realization for any $n\ge3$.
\end{corollary}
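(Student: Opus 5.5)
The corollary has two halves. I will prove the positive half by explicit models that extend Proposition~\ref{prop:bivariate-global}, and the negative half by reducing it to the three-dimensional obstruction of Proposition~\ref{prop:three-dim}.

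\emph{Minimum.} Work on $([0,1],\lambda)$ and take all $n$ chains equal to $A^{(i)}_{x}=[0,x]$. Each chain is increasing with $\lambda(A^{(i)}_x)=x$. The intersection $\bigcap_i[0,x_i]=[0,\min_i x_i]$ has length $G_n(x_1,\ldots,x_n)$ for every tuple at once. To work in an abstract strictly positive atomless $(\cB,\mu)$ instead, I would build one nested chain $c_t$ with $\mu(c_t)=t$ by divisibility on dyadic rationals and pass to limits.

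\emph{Product.} Work on $([0,1]^n,\lambda_n)$ and use coordinate strips $A^{(i)}_x=\{\omega:\omega_i\le x\}$. Each is increasing in $x$ with measure $x$. The $n$-fold intersection is the box $\prod_i[0,x_i]$, whose measure is $\prod_i x_i=P_n$ by product-measure multiplicativity, simultaneously for all tuples. These are the $n$-dimensional versions of the comonotone and independent constructions already used.

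\emph{Obstruction for $n\ge3$.} Suppose increasing chains $A^{(1)},\ldots,A^{(n)}$ realize $W_n$ for all $(x_1,\ldots,x_n)$. Setting $x_4=\cdots=x_n=1$ and using $A^{(i)}_1=1$ (measure $1$ plus strict positivity) gives
\[
\mu\bigl(A^{(1)}_{x}\wedge A^{(2)}_{y}\wedge A^{(3)}_{z}\bigr)=\bigl(x+y+z+(n-3)-(n-1)\bigr)^+=W_3(x,y,z).
\]
This contradicts Proposition~\ref{prop:three-dim}.

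Alternatively I would argue directly, which avoids the reduction. Take the alternating volume over $[r,1]^n$, which equals $\mu\bigl(\bigwedge_i(A^{(i)}_1\setdiff A^{(i)}_r)\bigr)\ge0$. With $r=(n-2)/n$, a vertex having $k$ coordinates equal to $r$ has $W_n=(1-2k/n)^+$, so only $k=0,1$ contribute. This gives volume $1-n\cdot(1-2/n)=2-n<0$ for $n\ge3$.

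The main obstacle is the realization step in an abstract measure algebra rather than on Lebesgue space, namely producing independent chains. I would sidestep it by reading "admit simultaneous realizations" as existence of some model, as in Proposition~\ref{prop:bivariate-global}. In the negative direction nothing model-specific is needed beyond the nonnegativity of the alternating volume, which holds in any measure algebra.
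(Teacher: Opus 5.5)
Your main argument is correct and is the paper's proof: interval chains $[0,x]$ for $G_n$, coordinate half-spaces $\{\omega:\omega_i\le x\}$ in $([0,1]^n,\lambda_n)$ for $P_n$, and the specialization $x_4=\cdots=x_n=1$, which reduces $W_n$ to Proposition~\ref{prop:three-dim}. Your observation that $A^{(i)}_1=1$ by strict positivity makes explicit a step the paper leaves tacit.

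The optional direct computation does not work as written. With $r=(n-2)/n$, the $k=0,1$ terms sum to $1-n(1-2/n)=3-n$, not $2-n$. That value is $0$ at $n=3$, so the key case gives no contradiction. For $n\ge5$, vertices with $2\le k<n/2$ also contribute, and the volume is $0$ at $n=5$ and $2$ at $n=6$. Choosing $r=1/2$ instead leaves only the $k=0,1$ terms and gives volume $1-n/2<0$ for every $n\ge3$.
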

\begin{proof}
For $G_n$, on $([0,1],\lambda)$ take $A^{(i)}_{x_i}=[0,x_i]$. For $P_n$, on $([0,1]^n,\lambda_n)$ take $A^{(i)}_{x_i}=\{t:t_i\le x_i\}$. If $W_n$ were simultaneously realizable for some $n\ge3$, fixing $x_4=\cdots=x_n=1$ would realize $W_3$, contradicting Proposition~\ref{prop:three-dim}.
\end{proof}

\begin{proposition}[T-norm admissibility is weaker than measure admissibility]\label{prop:tnorm-weaker}
There exist continuous t-norms $T$ for which $T(x,y)\notin x\st y$ at some marginals.
\end{proposition}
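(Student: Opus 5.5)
We need a continuous t-norm with $T(x,y)\notin[(x+y-1)^+,\min\{x,y\}]$ at some point. Every t-norm satisfies $T\le\min$, so the only possible failure is $T(x,y)<(x+y-1)^+$, i.e. $T<\TL$ somewhere. The plan is to exhibit a continuous t-norm that lies strictly below $\TL$ at a single pair of marginals, and then invoke \eqref{eq:frechet}.

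Two candidates suggest themselves. The first is the drastic product, but it is not continuous. The second is the Schweizer--Sklar or Yager family. I would take the Yager t-norm
\[
T^{Y}_p(x,y)=\max\{0,\,1-((1-x)^p+(1-y)^p)^{1/p}\}
\]
with $0<p<1$. This is a continuous (nilpotent, Archimedean) t-norm, generated by the additive generator $t(x)=(1-x)^p$. That it is a continuous t-norm is the standard fact, and I will quote it as such \cite{Klement2000}.

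For the evaluation, take $p=1/2$ and $x=y=3/4$. Then $(1-x)^{1/2}+(1-y)^{1/2}=2\cdot\frac12=1$, so $T(3/4,3/4)=\max\{0,1-1^2\}=0$. By contrast, $\TL(3/4,3/4)=1/2$. Hence $T(3/4,3/4)=0<1/2$, so this value lies outside $3/4\st3/4=[1/2,3/4]$ by \eqref{eq:frechet}.

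An alternative route, if one prefers a family conjugate to $\TL$, is Proposition~\ref{prop:regraduation}. There $T_2(x,y)=h_2^{-1}(\TL(h_2(x),h_2(y)))$, and I would search for a point where $T_2<\TL$. Small arguments are promising: for $x=y$ slightly above the zero-set boundary, $h_2(x)$ is near $1/2$ while $2x-1>0$. For example, $x=y=0.55$ gives $h_2(0.55)\approx0.599$, so $\TL(h_2,h_2)\approx0.198$. Pulling back gives $h_2^{-1}(0.198)\approx0.33$, which exceeds $\TL=0.1$, so that point fails. One would need a point with $h_2(x)<x$, i.e. $x<1/2$, where $\TL(x,x)=0$ anyway, so this route likely does not work. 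The Yager example is therefore the intended one.

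The only real obstacle is certifying that the chosen function is a continuous t-norm, meaning associativity and monotonicity. That certification comes from the additive-generator representation, $T(x,y)=t^{(-1)}(t(x)+t(y))$ with $t$ continuous and strictly decreasing, $t(1)=0$. I would simply note that this representation holds and cite it.
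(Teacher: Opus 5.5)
Your proof is correct and is essentially the paper's argument. The paper also exhibits a continuous nilpotent t-norm order-isomorphic to $\TL$ and evaluates it at $(3/4,3/4)$: it uses $h^{-1}(\TL(h(x),h(y)))=\sqrt{\max\{0,x^2+y^2-1\}}$ with $h(x)=x^2$, which gives $1/(2\sqrt2)<1/2$, whereas your Yager $p=1/2$ t-norm is exactly the conjugate of $\TL$ by $\varphi(x)=1-(1-x)^{1/2}$ and gives $0<1/2$. The paper's observation that order conjugation preserves the t-norm axioms is self-contained and could replace your appeal to the additive-generator theorem, and the exploratory paragraph on $T_2$ can simply be dropped.
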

\begin{proof}
Conjugate $\TL$ by the order automorphism $h(x)=x^2$: $T_h(x,y)=h^{-1}(\TL(h(x),h(y)))=\sqrt{\max\{0,x^2+y^2-1\}}$. Order conjugation preserves commutativity, associativity, monotonicity, continuity, and neutral element $1$, so $T_h$ is a continuous t-norm. But $T_h(3/4,3/4)=1/(2\sqrt2)<1/2=\TL(3/4,3/4)$, violating the necessary lower bound in \eqref{eq:frechet}.
\end{proof}

Once the Zadeh scalar calculus has been derived, ordinary fuzzy-set semantics is downstream: $1-x$, $\min$, and $\max$ lift pointwise to $[0,1]^X$, characteristic functions recover the crisp Boolean core, and measurable functions remain closed under these continuous operations. The usual functional representation of Zadeh fuzzy sets is therefore the endpoint of the derivation, not the premise from which the operations are assumed.

\section{Conclusion}\label{sec:conclusion}
Zadeh fuzzy logic follows from the measure-algebraic construction in three steps. Boolean complement descends uniquely to $1-x$. Boolean meet descends exactly to the associative overlap interval $[(x+y-1)^+,\min\{x,y\}]$, rather than to a single scalar. Finally, requiring the scalar conjunction to be nondecreasing and idempotent---equivalently, preserving the nesting behavior of events---selects $\min\{x,y\}$ uniquely. De Morgan duality then gives $\max\{x,y\}$, and a nested event chain realizes the min--max pair simultaneously. The standard complement, minimum conjunction, and maximum disjunction of Zadeh fuzzy logic are therefore derived rather than postulated.

The same construction also makes the price of scalarization explicit. No nontrivial measure-preserving quotient retains Boolean meet, and the discrepancy formula quantifies the distinctions erased inside each scalar fiber. Minimum overlap and independence yield the \Luk{} and product conjunctions as alternative semantic selections. Product has sharp binary minimax error $1/4$, although the corresponding $n$-ary operation ceases to be minimax beyond two arguments. Interval-divisible measures in unital Abelian $\ell$-groups retain the exact finite overlap spectrum and hyperassociativity.

The three-dimensional obstruction marks the boundary of the derivation: pointwise sharp attainability does not by itself provide simultaneous realization of all marginal combinations. Zadeh's nesting selection avoids that obstruction and admits a global chain model in every finite arity. Measure theory therefore supplies both a derivation of the Zadeh calculus and a precise account of the additional dependence choice encoded by its min--max operations.

\bibliography{references}
\end{document}